\documentclass[11pt,reqno,psamsfonts]{amsart}
\pdfoutput=1

\usepackage{amssymb}
\usepackage{amsthm}
\usepackage{amsmath}
\usepackage{latexsym}
\usepackage[T1]{fontenc}
\usepackage[utf8]{inputenc}
\usepackage[russian, french, 
english]{babel}

\usepackage{graphicx}
\usepackage{wrapfig}
\usepackage{mathtools}
\usepackage{amsbsy}
\usepackage[inline]{enumitem}
\usepackage{mathrsfs}
\usepackage{array}
\usepackage{multicol}
\usepackage{stmaryrd}
\usepackage{cancel}
\usepackage{lmodern}
\usepackage{mathabx}
\usepackage{upgreek}
\usepackage{titlesec}
\usepackage{titletoc}
\usepackage[spacing=true,kerning=true,babel=true,tracking=true]{microtype}
\usepackage[shortcuts]{extdash}
\usepackage[foot]{amsaddr}
\usepackage[left=1in,right=1in,top=1in,bottom=1in,bindingoffset=0cm]{geometry}
\usepackage{bm}
\usepackage{centernot}
\usepackage{mdframed}
\usepackage[hidelinks]{hyperref}
\usepackage{xspace}
\usepackage[most]{tcolorbox}
\usepackage{framed}
\usepackage[labelfont=bf]{caption}
\usepackage{tikz}
\usetikzlibrary{math}
\usetikzlibrary{shapes,snakes}
\usetikzlibrary{arrows.meta}
\usetikzlibrary{decorations.pathmorphing}
\usetikzlibrary{patterns}
\usetikzlibrary{positioning,calc,intersections}
\usepackage{tikzsymbols}
\usepackage{float}

\DeclareFontFamily{U}{skulls}{}
\DeclareFontShape{U}{skulls}{m}{n}{ <-> skull }{}

\usepackage[
    sortcites,
    backend=biber, style=alphabetic, sorting=nyt, maxnames=100,backref=true]{biblatex}
\newtheoremstyle{bfnote}%
{}{}%
{\slshape}{}%
{\bfseries}{\bfseries.}%
{ }%
{\thmname{#1}\thmnumber{ #2}\thmnote{ \ep{\normalfont{}#3}}}

\theoremstyle{bfnote}
\newtheorem{theo}{Theorem}[section]
\newtheorem{theorem}[theo]{Theorem}
\newtheorem*{theo*}{Theorem}
\newtheorem{proposition}[theo]{Proposition}
\newtheorem{lemma}[theo]{Lemma}

\newtheorem{corollary}[theo]{Corollary}

\newtheorem*{corl*}{Corollary}

\theoremstyle{definition}
\newtheorem{definition}[theo]{Definition}
\newtheorem*{defn*}{Definition}

\newtheorem{question}[theo]{Question}

\newtheorem*{remks*}{Remarks}
\newtheorem*{exmp*}{Example}

\theoremstyle{remark}
\newtheorem*{ques*}{Question}
\newtheorem*{remk*}{Remark}

\newcommand*{\myproofname}{Proof}

\makeatletter
\newcommand{\neutralize}[1]{\expandafter\let\csname c@#1\endcsname\count@}
\makeatother

\newcommand{\Q}{\mathbb{Q}}

\renewcommand{\epsilon}{\varepsilon}

\renewcommand{\phi}{\varphi}
\renewcommand{\theta}{\vartheta}
\renewcommand{\leq}{\leqslant}
\renewcommand{\geq}{\geqslant}

\newcommand{\bemph}[1]{{\normalfont#1}}
\newcommand{\ep}[1]{\bemph{(}#1\bemph{)}}

\newcommand{\emphd}[1]{{\fontseries{b}\selectfont\textsf{#1}}}

\newcommand{\N}{\mathbb{N}}
\newcommand{\Z}{\mathbb{Z}}

\newcommand{\asdim}{\mathrm{asdim}}

\newcommand{\asdimB}{\mathrm{asdim}_{\mathsf B}}
\newcommand{\rk}{\mathrm{rk}}
\newcommand{\gp}{\mathrm{gp}}
\newcommand{\free}{\mathcal F}
\newcommand{\period}{\mathcal P}
\newcommand{\univ}{\mathcal U}

\numberwithin{equation}{section}

\titleformat{\section}[block]{\large\bfseries\sffamily}{\thesection.}{1ex}{}
\titleformat{\subsection}[block]{\bfseries\sffamily}{\thesubsection.}{1ex}{}
\titleformat{\subsubsection}[runin]{\bfseries}{\bfseries\upshape\thesubsubsection.}{1ex}{}[.]

\titlespacing*{\section}{0pt}{*3}{*1}
\titlespacing*{\subsection}{0pt}{*3}{*1}
\titlespacing*{\subsubsection}{0pt}{*1.5}{*1}

 \titlecontents{section}
 [1.5em] %
 {\smallskip}
 {\bfseries\thecontentslabel\hspace{1.02em}}
{\bfseries}
 {\,\,\titlerule*[0.77pc]{}\bfseries\contentspage}
\titlecontents{subsection}
 [4em] %
 {\smallskip}
 {\thecontentslabel\hspace{1.02em}}
{\hspace*{2.32em}}
 {\,\,\titlerule*[0.77pc]{.}\contentspage}

\renewbibmacro{in:}{}

\renewbibmacro*{volume+number+eid}{%
	\printfield{volume}%
	\setunit*{\addnbspace}
	\printfield{number}%
	\setunit{\addcomma\space}%
	\printfield{eid}}

\DeclareFieldFormat[article]{volume}{\textbf{#1}\space}
\DeclareFieldFormat[article]{number}{\mkbibparens{#1}}

\DeclareFieldFormat{journaltitle}{#1,}
\DeclareFieldFormat[thesis]{title}{\mkbibemph{#1}\addperiod}
\DeclareFieldFormat[article, unpublished, thesis]{title}{\mkbibemph{#1},}
\DeclareFieldFormat[book]{title}{\mkbibemph{#1}\addperiod}
\DeclareFieldFormat[unpublished]{howpublished}{#1, }

\DeclareFieldFormat{pages}{#1}

\DeclareFieldFormat[article]{series}{Ser.~#1\addcomma}

\setlist{topsep=3pt,itemsep=3pt}

\title{\sffamily Asymptotic dimension of commutative monoid actions}
\date{}

\author{Jinmin Wang}
\address[JW]{\normalfont Institute of Mathematics, Chinese Academy of Sciences, Beijing, China}
\email{jinmin@amss.ac.cn}

\author{Jing Yu}
\address[JY]{\normalfont Shanghai Center for Mathematical Sciences, Fudan University, Shanghai, China}
\email{jyu@fudan.edu.cn}

\author{Jingming Zhu}
\address[JZ]{\normalfont College of Data Science, Jiaxing University, Jiaxing, China}
\email{jingmingzhu@zjxu.edu.cn}

\thanks{JW is partially supported by the National Natural Science Foundation of China (NSFC) 12501169. JY is partially supported by the NSFC 12371343 and 12525110 (PI: Hehui Wu).}

\begin{document}


\maketitle

\begin{abstract}
    We prove that every bounded-to-one action of a finitely generated commutative monoid has asymptotic dimension at most the torsion-free rank of its group completion, with control uniform for a fixed monoid, generating set, and generator fiber bound. The estimate is sharp. Together with the equality of classical and Borel asymptotic dimensions for these actions, it gives the sharp Borel rank bound and recovers hyperfiniteness for bounded-to-one Borel actions of countable commutative monoids. 
\end{abstract}

\section{Introduction}
Borel combinatorics studies combinatorial constructions on standard Borel spaces under the additional requirement that the objects constructed be Borel. A basic problem is to understand the countable Borel equivalence relations generated by Borel maps and actions. Of particular interest is hyperfiniteness: whether such a relation can be expressed as an increasing union of finite Borel equivalence relations.
A countable Borel equivalence relation $E$ is \emphd{hyperfinite} if 
\[E = \bigcup_{j \in \N} E_j, \qquad E_0 \subseteq E_1 \subseteq \cdots\]
where each $E_j$ is a Borel equivalence relation with finite classes. 

Two relevant cases are well understood. Gao and Jackson \cite{GJ15} proved that every Borel action of a countable abelian group has hyperfinite orbit equivalence relation. For a countable-to-one Borel map $f: X \to X$, where $X$ is a standard Borel space, Dougherty, Jackson, and Kechris \cite{DJK94} proved hyperfiniteness of the relation
\[x E_f y \iff f^{a}(x) = f^b(y) \textrm{ for some } a, b \in \N.\]
The first result concerns invertible maps, while the second allows noninvertibility but only one generating map. Commuting families of noninvertible maps bring these two settings together. Their composites form a commutative monoid, so monoid actions provide a natural language in which to retain the relations among the maps.

Shinko, Weilacher, and the second-named author \cite{SWY} proved that every bounded-to-one Borel action of a countable commutative monoid has hyperfinite orbit equivalence relation. Here a map is \emphd{bounded-to-one} if its fibers are uniformly finite. An action is \emphd{bounded-to-one} if every monoid element acts by such a map; the bound is allowed to depend on the monoid element. This paper concerns the sharp asymptotic dimension bound underlying these actions.

\subsection{Main results}

A \emphd{monoid} is a set with an associative multiplication and an identity element $1$. An action $M\curvearrowright X$ is a family of maps $x\mapsto ax$, $a\in M$, satisfying $1x=x$ and $(ab)x=a(bx)$. A set with such an action is an \emphd{$M$-set}. We consider commutative monoids. Given a finite generating set $S\subseteq M$, the \emphd{Schreier graph} $G_S^X$ has an undirected edge $\{x,sx\}$ for every $s\in S$ and $x\in X$, with loops discarded. Its path metric $\rho_X$ takes the value $\infty$ between distinct connected components. 

At scale $r$, an \emphd{$r$-component} of a subset is a maximal set connected by finite chains whose successive distances are at most $r$. The inequality $\asdim X\le n$ means that, at every scale $r\ge1$, the space can be covered by $n+1$ sets whose $r$-components have uniformly bounded diameter. The \emphd{control function} denotes the uniform diameter bound as a function of $r$. For a Schreier graph, this dimension does not depend on the finite generating set, and we can alternatively write it as $\asdim(M\curvearrowright X)$.

The \emphd{group completion} $M^{\mathrm{gp}}$ is obtained by formally adjoining inverses to the elements of $M$. For finitely generated commutative $M$, it is a finitely generated abelian group. We use the notation
\[
 M^{\mathrm{gp}}\cong\mathbb Z^n\oplus T,
 \qquad T\text{ finite},\qquad \operatorname{rk}(M)=n.
\]
The finite torsion subgroup does not contribute to the rank. Further algebraic details are given in Section~\ref{sec:pre}.

Shinko, Weilacher, and the second-named author \cite{SWY} raised the following question:
\begin{question}[{Shinko--Weilacher--Yu \cite[Question 5.17]{SWY}}]
    Let $M$ be a finitely generated commutative monoid
    and $X$ a bounded-to-one $M$-set.
    Is $\asdim(M \curvearrowright X) < \infty$?
    Is $\asdim(M \curvearrowright X) \le \rk(M)$? 
\end{question}

Our main theorem gives the sharp bound:   
\begin{theorem}\label{thm:rank}
Let $M$ be a finitely generated commutative monoid, let $S$ be a finite generating set, and let $K\geq1$ be an integer. For every bounded-to-one action $M\curvearrowright X$ satisfying
\[
 \sup_{x\in X,\ s\in S}|s^{-1}(x)|\leq K,
\]
we have
\[
\asdim(M \curvearrowright X) \le \rk(M).
\]
 The control function only depends on $M,S,K$. This dimension bound is sharp.
\end{theorem}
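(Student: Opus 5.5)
We reduce the action to an action of a free commutative monoid, pass to a quotient "height" structure, and then run a Dougherty–Jackson–Kechris type argument inside a map to $\Z^n$. Since $S=\{s_1,\dots,s_m\}$ generates $M$, the surjection $\N^m\to M$ lets us regard $X$ as a bounded-to-one $\N^m$-set. The Schreier graph for the standard generators is the same as $G_S^X$, and the fiber bound $K$ is unchanged. The relations of $M$ are recorded by a congruence on $\N^m$. By Rédei's theorem this congruence is finitely generated, and its group completion has kernel $L\le\Z^m$ with $\Z^m/L\cong M^{\gp}$. So $\rk(M)=m-\rk L$.

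\textbf{Step 1 (local structure).} On each connected component, fix a basepoint $x_0$. Define a partial "displacement" cocycle $c(x,y)\in M^{\gp}$ by summing the generators along a path, with sign recording direction. This is not well defined in general, because a non-injective generator can identify points. The key finiteness input is that within a ball of radius $R$, the number of points is bounded in terms of $m,K,R$. Indeed, backward branching is at most $K$ per generator and forward moves are deterministic. So the ambiguity of $c$ on closed loops of length at most $R$ lies in a finite set $F_R\subset M^{\gp}$ depending only on $M,S,K,R$.

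Next we show that loops force torsion-free relations to be trivial up to a bounded amount. If $ax=bx$ for $a\ne b$ in $M$ with $a-b$ of infinite order in $M^{\gp}$, then iterating gives eventual periodicity of the orbit along the direction $a-b$. This is the monoid analogue of DJK's "eventually periodic points" in the single-map case. We therefore split $X$ into the part where the generated relation has full rank $n$ and the degenerate parts, and treat the latter by induction on rank. The degenerate part is a bounded-to-one action of a quotient monoid of smaller rank, up to bounded distortion.

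\textbf{Step 2 (coarse map to $\Z^n$).} On the nondegenerate part, the cocycle, projected to $\Z^n=M^{\gp}/T$, becomes well defined up to a bounded error. This gives, on each component, a map $\phi:X\to\Z^n$ that is $C$-Lipschitz. We then want to apply a Hurewicz-type theorem for asdim, $\asdim X\le\asdim\Z^n+\sup\asdim(\text{fibers})$, which yields the bound $n$ once we show the fibers $\phi^{-1}(B)$ of bounded sets $B$ are uniformly bounded in the sense of asdim $0$. Fibers of a point of $\Z^n$ are "horizontal" sets of points reachable by the same net displacement. These are exactly the sets that in DJK's argument are controlled by forward orbits merging: $x,y$ with $ax=by$ and $a-b$ torsion. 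The main obstacle is showing that such sets have asymptotic dimension $0$ with control depending only on $M,S,K$. I would attack this by building a uniform tree-like structure. Choose a generic linear functional $\ell$ on $\Z^n$, taking positive values on the images of generators that have infinite order. Group points by merging far forward in a fixed element $g\in M$ with $\ell(g)$ large. The bounded-to-one hypothesis then controls the backward branching at each level by $K^{|g|}$. That is enough to cut the fiber at scale $r$ into pieces of diameter bounded by a function of $r,M,S,K$, by cutting the backward trees at levels spaced about $r$ apart. This is the standard proof that a single bounded-to-one map has asdim $1$, with the vertical direction absorbed into $\Z^n$.

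\textbf{Step 3 (sharpness and uniformity).} Uniformity follows because every constant above depends only on $m$, the Rédei relations, and $K$. For sharpness, let $M$ act on itself by translation. We may pick a free submonoid $\N^n$ of finite index in the torsion-free part. The Schreier graph of $M\curvearrowright M$ then contains coarsely isometric copies of arbitrarily large balls of $\N^n$, and hence has $\asdim\ge\asdim\N^n=n$. This translation action is bounded-to-one once we pass to the cancellative quotient, for example the image of $M$ in $M^{\gp}$.
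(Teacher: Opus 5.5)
\textbf{Gap: points carrying long relations.} Your split into a nondegenerate and a degenerate part breaks down at points $x$ with $ax=bx$, $ab^{-1}$ of infinite order, and $\ell_S(a)+\ell_S(b)$ large. This is exactly where the difficulty of the theorem lies, and neither side of your split can absorb these points.

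\emph{If they go into the degenerate part}, that part is $\bigcup_{a\ne b}\period_{a,b}$ over infinitely many pairs. It therefore carries actions of infinitely many different quotients $M/\langle a=b\rangle$. The induction hypothesis gives a control for each quotient, but with no uniformity in $(a,b)$, and the finite union theorem does not apply. This fails already for $M=\N$. The action $x\mapsto x+1$ on $\Z/N\Z$ has $K=1$ and is entirely degenerate: every point satisfies $f^Nx=x$, and the quotient has rank $0$. Yet at scale $r=1$ the only one-set cover has a component of diameter $\lfloor N/2\rfloor\to\infty$. So there is no uniform rank-$0$ control.

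\emph{If they go into the nondegenerate part}, your cocycle is not well defined up to bounded error. Traversing the loop $x\to ax=bx\to x$ $k$ times changes $c$ by $(ab^{-1})^k$, so the ambiguity is an infinite cyclic subgroup of $\Z^n$. The cocycle then gives no Lipschitz map $\phi$ to $\Z^n$, and the Hurewicz step cannot start. Where no infinite-order relations occur, your cocycle and the merging argument with a fixed $g$ are reasonable, but that is the easy case. The uniformity claim in your Step 3 inherits the same problem.

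\textbf{How the paper handles it.} It chooses a cutoff $C=C(r)$ depending on the scale. Only relations with $\ell_S(a)+\ell_S(b)\le C$ go into $\period_C$. That set involves finitely many quotients, so induction plus the finite union theorem apply; this is done at the enlarged scale $R_\free+2r$ and then glued via Lemma~\ref{lemma:asdimDecompos}.

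The complement $\free_C$ still contains points with long relations, so the paper uses no global coordinate there. Instead it proceeds locally:
\begin{enumerate*}[label=(\roman*)]
\item it encodes the radius-$w$ neighborhood of each $x\in\free_C$ as a cylinder in the universal free zero-dimensional $\Gamma$-space $\univ$, using the elements $qu$ for $u\in B_\Gamma(w)$ and proper colorings $\lambda_m$;
\item it pulls back a clopen partition witnessing $\operatorname{dad}(\Gamma\curvearrowright\univ)\le n$;
\item it shows that $r$-chains within one color have bounded group displacement.
\end{enumerate*}
This local dynamic input (Theorem~\ref{thm:dadZn}) is the missing idea. It replaces your Hurewicz step and yields the sharp value $n$; nothing in your sketch plays its role.
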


The translation action $\N^n\curvearrowright\Z^n$ gives the standard lattice, whose asymptotic dimension is $n$.
A commuting family of $k$ maps defines an $\N^k$-action, so we obtain the following consequence.

\begin{corollary}\label{cor:maps}
Let $k,K\geq1$ be integers, and let $f_1,\ldots,f_k\colon X\to X$ be pairwise commuting maps with fibers of cardinality at most $K$. Then the graph with edges $\{x,f_j(x)\}$ has asymptotic dimension at most $k$, with control only depending on $k,K$.
\end{corollary}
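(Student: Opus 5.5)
The statement to prove is Corollary~\ref{cor:maps}. I will reduce it to Theorem~\ref{thm:rank} applied to the free commutative monoid $M=\N^k$.

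\textbf{Step 1: define the action.} Let $e_1,\dots,e_k$ be the standard generators of $\N^k$ and take $S=\{e_1,\dots,e_k\}$. Define the action by
\[
(a_1,\dots,a_k)\cdot x = f_1^{a_1}\circ\cdots\circ f_k^{a_k}(x).
\]
Because the $f_j$ commute pairwise, the order of composition is irrelevant. Hence $0\cdot x=x$ and $(a+b)\cdot x = a\cdot(b\cdot x)$, so this is a genuine action of $\N^k$ on $X$.

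\textbf{Step 2: check bounded-to-one.} The hypothesis of Theorem~\ref{thm:rank} requires each monoid element to act with finite fibers, and the generators to have fibers bounded by a uniform constant. Each $f_j$ has fibers of size at most $K$. A composite of $m$ such maps has fibers of size at most $K^m$, since each fiber of $g\circ h$ is a union of at most $|g^{-1}(y)|$ fibers of $h$. Therefore the element $a$ acts with fibers of size at most $K^{a_1+\cdots+a_k}$, and the action is bounded-to-one. For the generators themselves,
\[
\sup_{x\in X,\,j}|e_j^{-1}(x)| = \sup_{x\in X,\,j}|f_j^{-1}(x)|\le K.
\]

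\textbf{Step 3: identify the graph.} The Schreier graph $G_S^X$ has edges $\{x,f_j(x)\}$ with loops discarded. Discarding loops does not change the path metric, so this is exactly the graph in the corollary.

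\textbf{Step 4: compute the rank.} The group completion is $(\N^k)^{\gp}=\Z^k$, which is torsion-free, so $\rk(\N^k)=k$.

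\textbf{Step 5: conclude.} Theorem~\ref{thm:rank} now gives asymptotic dimension at most $k$. Its control function depends only on $M,S,K$. Here $M=\N^k$ and $S$ is its standard generating set, both determined by $k$, so the control depends only on $k$ and $K$.

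There is no real obstacle. The only points needing care are that commutativity is exactly what makes the assignment a monoid action, and that the bounded-to-one hypothesis must hold for all monoid elements, not just the generators; Step 2 settles the latter by the fiber bound for composites.
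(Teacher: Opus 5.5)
Your proposal is correct and follows the paper's route exactly. The paper obtains Corollary~\ref{cor:maps} from Theorem~\ref{thm:rank} by regarding the commuting family as an action of $\N^k$, which has rank $k$ and whose standard generators have fiber bound $K$; your Steps 1--5 spell out the verifications the paper leaves implicit, including the composite fiber bound $K^{a_1+\cdots+a_k}$ that the paper records after its definition of bounded-to-one actions.
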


\subsection{Borel corollaries}

We recall the terminology needed for the Borel consequences. A \emphd{Polish space} is a separable completely metrizable topological space. A \emphd{standard Borel space} is a measurable space isomorphic to a Borel subset of a Polish space, equipped with its Borel sets. An action of a countable monoid on such a space is \emphd{Borel} if every map $x\mapsto ax$ is Borel. A \emphd{Borel graph} is a symmetric irreflexive Borel subset of $X\times X$. For a locally finite Borel graph (one with finitely many neighbors at each vertex), $\asdimB G\le n$ means that the $n+1$ color sets witnessing $\asdim G\le n$ can be chosen Borel at every scale. For a Borel action $M\curvearrowright X$ of a finitely generated
monoid and a finite generating set $S$, define
\[
    \asdimB(M\curvearrowright X)
    :=\asdimB(G_S^X).
\]


\begin{theorem}[{Shinko--Weilacher--Yu \cite[Corollary 5.16]{SWY}}]\label{cor:comm_monoid_asdim_eq}
    Let $M$ be a finitely generated commutative monoid and $X$ a bounded-to-one Borel $M$-set. 
    Then 
    \[
    \asdimB(M \curvearrowright X) = \asdim(M \curvearrowright X).
    \] 
\end{theorem}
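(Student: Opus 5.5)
The plan is to deduce the equality from a Borel version of the finite-dimensional decomposition. The inequality $\asdim \le \asdimB$ is trivial, since Borel witnesses are in particular witnesses. So the content is $\asdimB(M\curvearrowright X)\le \asdim(M\curvearrowright X)$. I would reduce this to a general principle for locally finite Borel graphs of the type produced by bounded-to-one actions: for such graphs, whenever the (classical) asymptotic dimension is finite with some control function, the Borel asymptotic dimension is at most the same number.

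First I would show that $G_S^X$ has bounded degree and that its components have polynomial growth. Bounded degree follows because each $s\in S$ contributes at most $1+K_s$ neighbours, where $K_s=\sup_x|s^{-1}(x)|<\infty$. For growth, a ball of radius $r$ around $x$ is contained in the set of points $y$ with $a y = b x$ for words $a,b$ of length at most $r$. Commutativity lets us write every such word as $s_1^{e_1}\cdots s_k^{e_k}$, so there are only $O(r^k)$ monoid elements of length at most $r$. Each fiber $a^{-1}(bx)$ has size at most $K^{|a|}$, however, and that is exponential. I would handle this by instead using the component structure: pass to the ``backward-saturated'' orbit and apply the known theorem (Bernshteyn--Yu style) that Borel graphs of polynomial growth have $\asdimB=\asdim$. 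If polynomial growth fails, I would replace this step by the argument of \cite{SWY}: reduce to the case where the action is ``eventually injective'' on a Borel set using the Dougherty--Jackson--Kechris technique of tails.

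The main step follows. Fix a scale $r$, and take $n=\asdim(M\curvearrowright X)$ with control function $D(r)$. I would choose a Borel maximal $R$-discrete set for $R\gg D(r)$ (this exists by Kechris--Solecki--Todorcevic for locally finite Borel graphs). I would then use the Borel Voronoi-type partition around it into bounded pieces, equipped with Borel finite colourings of the pieces' adjacency. On each finite piece-neighbourhood, the classical witness exists by hypothesis. Using a countable Borel colouring of bounded-size configurations, I would choose witnesses locally in finitely many rounds and glue them, in the manner of the ``Borel asymptotic dimension via local covers'' argument.

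I expect the gluing to be the obstacle: patching local $(n+1)$-covers so that $r$-components stay bounded without increasing the number of colours. I would try first to exploit the monoid structure directly. Along the $\mathbb Z^{\rk}$-directions of $M^{\gp}$, tail equivalence under a fixed element gives hyperfinite Borel subrelations, and on these the classical covering can be chosen canonically, hence Borel.
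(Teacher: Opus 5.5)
The paper gives no proof of this statement; it is quoted from \cite{SWY}. Judged on its own, your proposal has a genuine gap. The inequality $\asdim\le\asdimB$ is indeed trivial, but neither of your routes to $\asdimB\le\asdim$ works.

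The polynomial-growth route fails. Take a $2$-to-$1$ map without periodic points, i.e.\ an $\N$-action with $K=2$: every component is a $3$-regular tree. The ``backward-saturated orbit'' is the whole component, so passing to it changes nothing. The fallback to ``the argument of \cite{SWY}'' is a citation, not a step.

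More seriously, your main step uses nothing but local finiteness, so it cannot be correct as stated. Consider the Schreier graph of a free probability-measure-preserving Borel action of $F_2$. Its components are $4$-regular trees, so $\asdim=1$ with uniform control. But its orbit relation is not hyperfinite, so $\asdimB=\infty$ by Theorem~\ref{thm:hyperfinite_of_dimension}.

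The failure is exactly the gluing you flagged, and it is not a technicality. Witnesses on finite pieces always exist and can be chosen in a Borel way; the problem is making the choices independent. Voronoi cells touch, so you must colour them so that cells of the same colour are $\sigma$-separated, and the number of colours (rounds) grows with $\sigma$. Gluing rounds with Lemma~\ref{lemma:asdimDecompos}, however, requires the $k$th round to be separated at scale $\sigma_k$, where $\sigma_0=r$ and $\sigma_{k+1}=D(\sigma_k)+2\sigma_k$. That scale grows with the number of rounds, so the argument is circular.

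What is missing is a Borel layering whose number of layers does not depend on the scale. Concretely, you need a fixed $s$ such that for every $R$ there are Borel sets $U_0,\dots,U_s$ covering $X$, all of whose $R$-components are finite. Given this, your idea goes through. Set $n=\asdim<\infty$ with control $D$, and take $R>\sigma_s$.
\begin{enumerate*}[label=(\roman*)]
\item Give each $R$-component of $U_k$ an $(n+1)$-colouring whose $\sigma_k$-components have diameter at most $D(\sigma_k)$. Restricting a classical witness shows such a colouring exists.
\item The $R$-components form a finite Borel equivalence relation, so the choice can be made Borel.
\item Glue $U_k$ to $U_{k+1}\cup\cdots\cup U_s$ by Lemma~\ref{lemma:asdimDecompos}, for $k=s-1,\dots,0$.
\end{enumerate*}
The final control depends only on $r$, $D$ and $s$.

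However, together with $\asdim<\infty$, such a layering yields finite Borel asymptotic dimension and hence hyperfiniteness. It is therefore a deep input that must be extracted from the commutative monoid structure. Your last paragraph does not supply it: hyperfinite subrelations along directions of $M^{\gp}$ give neither ``canonical'' classical covers nor a bounded number of layers at every scale.
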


\begin{corollary}\label{col:borel}
        Let $M\curvearrowright X$ be a bounded-to-one Borel action of a finitely generated commutative monoid on a standard Borel space. Then
\[
 \asdimB(M\curvearrowright X)\le\rk(M).
\]
\end{corollary}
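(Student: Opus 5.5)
This follows by combining Theorem~\ref{cor:comm_monoid_asdim_eq} with Theorem~\ref{thm:rank}, so the plan is a direct reduction. Fix a finite generating set $S$ of $M$. By Theorem~\ref{cor:comm_monoid_asdim_eq}, applied to the bounded-to-one Borel $M$-set $X$,
\[
\asdimB(M\curvearrowright X)=\asdim(M\curvearrowright X).
\]
It then suffices to bound the classical asymptotic dimension $\asdim(M\curvearrowright X)=\asdim(G_S^X)$ by $\rk(M)$. For this we use Theorem~\ref{thm:rank}. Forgetting the Borel structure costs nothing here, because Theorem~\ref{thm:rank} is a statement about abstract actions.

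The one hypothesis to check is the uniform fiber bound over the generators. Bounded-to-one means that each element $a\in M$ acts by a map whose fibers have uniformly bounded size; the bound $K_a$ may depend on $a$. Since $S$ is finite, the integer
\[
K:=\max\bigl(1,\max_{s\in S}\sup_{x\in X}|s^{-1}(x)|\bigr)
\]
is finite, and $K\ge 1$. So the action satisfies the hypothesis of Theorem~\ref{thm:rank} with this $K$, and we get $\asdim(M\curvearrowright X)\le\rk(M)$. Combined with the displayed equality, this gives $\asdimB(M\curvearrowright X)\le\rk(M)$.

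There is no real obstacle. The only points to watch are, first, that $\asdimB$ is defined through $G_S^X$ for a chosen finite $S$, and Theorem~\ref{cor:comm_monoid_asdim_eq} is applied to that same $S$. Second, the graph $G_S^X$ is a locally finite Borel graph. Local finiteness holds because each vertex $x$ has at most $|S|$ forward neighbors $sx$ and at most $|S|K$ backward neighbors (points $y$ with $sy=x$). Borelness holds because the edge set is the symmetrized union of the graphs of the finitely many Borel maps $x\mapsto sx$, with the diagonal removed. So $\asdimB$ makes sense and the corollary follows.
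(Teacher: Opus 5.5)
Your proposal is correct and is the paper's argument: the paper gives no separate proof block, but the abstract and the placement of the corollary make clear it is obtained by combining the Shinko--Weilacher--Yu equality $\asdimB(M\curvearrowright X)=\asdim(M\curvearrowright X)$ with Theorem~\ref{thm:rank}. You also supply the one needed check, that finiteness of $S$ gives a finite generator fiber bound $K$. Your remarks on local finiteness and Borelness of $G_S^X$ are harmless extra bookkeeping.
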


For a countable commutative monoid action $M\curvearrowright X$, let $E_M^X$ be the equivalence relation generated by all pairs $(x,ax)$. Equivalently,
\[
 x\mathrel{E_M^X}y
 \quad\Longleftrightarrow\quad
 ax=by\text{ for some }a,b\in M.
\]
The dimension bound recovers the hyperfiniteness theorem of
Shinko--Weilacher--Yu stated above. 
We use the following two consequences of finite Borel asymptotic dimension.

\begin{theorem}[Conley--Jackson--Marks--Seward--Tucker-Drob {\cite[Theorem 1.7]{CJMST23}}]\label{thm:hyperfinite_of_dimension}
    Let $G$ be a locally finite Borel graph. If $\asdimB(G) < \infty$ then $E_G$ is hyperfinite.
\end{theorem}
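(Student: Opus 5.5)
The plan is to build the increasing sequence of finite Borel equivalence relations $F_0\subseteq F_1\subseteq\cdots$ directly, by induction on a rapidly increasing sequence of scales. At stage $k$ I take the Borel cover witnessing $\asdimB(G)\le n$ at a scale chosen large compared with everything built so far. I then coarsen $F_{k-1}$ by merging its classes along this cover, one color at a time.

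Let $n=\asdimB(G)$, write $\rho$ for the path metric of $G$, and set $F_0$ to be equality with $B_0=0$. Here $B_k$ denotes a uniform bound on the $\rho$-diameter of $F_k$-classes. Suppose $F_{k-1}$ is a finite Borel equivalence relation whose classes are $G$-connected and have diameter at most $B_{k-1}$. Choose
\[
r_k > 2B_{k-1}+4 .
\]
By hypothesis there are Borel sets $U^k_0,\dots,U^k_n$ covering $X$ whose $r_k$-components have diameter at most some $D_k$.

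I will process colors $i=0,1,\dots,n$ in turn, starting from $F^{(-1)}=F_{k-1}$. For each $r_k$-component $C$ of $U^k_i$, let $N_1(C)$ be its closed $1$-neighborhood. Then $F^{(i)}$ is obtained from $F^{(i-1)}$ by merging, for each such $C$, all $F^{(i-1)}$-classes that meet $N_1(C)$ into a single class. Set $F_k=F^{(n)}$.

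Three properties must be checked.

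\emph{No uncontrolled chaining.} I claim that a single $F^{(i-1)}$-class cannot meet $N_1(C)$ and $N_1(C')$ for two distinct components $C,C'$ of the same color. Distinct components are at distance $>r_k$, so their $1$-neighborhoods are at distance $>r_k-2$. This exceeds the diameter of every class present at step $i$, provided the intermediate diameters stay below $r_k-2$. To make this hold for all $i$, I will instead choose $r_k$ larger than the recursively defined bound
\[
\beta_{-1}=B_{k-1},\qquad \beta_i=D_k+2+2\beta_{i-1}.
\]
This is legitimate because $D_k$ depends on $r_k$, but the requirement on $r_k$ only involves $\beta_{i-1}$. The cleanest way is to also shrink colors: apply the dimension hypothesis at a scale $R_k\gg r_k$ and argue with $r_k$-neighborhoods. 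This circularity between $r_k$ and $D_k$ is the step I expect to be the main technical obstacle. I would handle it exactly this way, by taking the scale for color $i$ large relative to $\beta_{i-1}$, for instance by using $n+1$ separate covers at increasing scales. Granting this, each merge at color $i$ is performed within disjoint groups, and the resulting classes are $G$-connected with diameter at most $\beta_i$. So $B_k=\beta_n$ is finite.

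\emph{Finiteness and Borelness.} By local finiteness, balls are finite, so $F_k$ has finite classes. Borelness follows because each step is defined from Borel sets by bounded-radius, locally finite conditions. Nestedness $F_{k-1}\subseteq F_k$ holds by construction.

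\emph{Exhaustion.} Take an edge $\{x,y\}$ of $G$, and pick $i$ with $x\in U^k_i$ lying in a component $C$. Then $x,y\in N_1(C)$, so their classes are merged at step $i$; hence $xF_ky$. Since every $F_k$ is $G$-connected-class, we get
\[
\bigcup_k F_k=E_G .
\]
This proves hyperfiniteness.
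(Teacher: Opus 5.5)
The paper gives no proof of this statement; it is quoted from \cite{CJMST23}. Your proposal has a genuine gap, and the problem is structural rather than the technical circularity you flag.

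Your exhaustion step does more than show that each edge is eventually absorbed. For every fixed $k\geq1$ and every edge $\{x,y\}$ it gives $x\mathrel{F_k}y$, because $x$ lies in some $U^k_i$. Hence $E_G\subseteq F_k$. Combined with your bound $B_k$ on the diameters of $F_k$-classes, every component of $G$ would then have diameter at most $B_k$. This fails whenever $G$ has an infinite component, since a connected, locally finite, infinite graph has infinite diameter. That is the only case with anything to prove; the Schreier graph of a free $\Z$-action is an example.

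So in the nontrivial case the conditions you impose at a single stage are jointly unsatisfiable. No cover at scale $r_k$ can meet your requirement $r_k>\beta_{n-1}+2$, because merging along all $n+1$ colors of one cover necessarily chains. Your repair takes color $i$ from a separate cover $\mathcal U^{k,i}$ at a scale large compared with $\beta_{i-1}$. This restores boundedness but destroys exhaustion: the processed sets $U^{k,0}_0,\dots,U^{k,n}_n$ no longer cover $X$. Nothing prevents a point from missing the processed color at every substep of every stage, so its class may never grow. Since $\bigcup_kF_k=E_G$ is the entire content of the theorem, the proposal does not prove it.

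One way around this is to give up exhaustion and keep only a multiplicity bound.
\begin{itemize}
\item At stage $k$, take a single cover $U^k_0,\dots,U^k_n$ at a scale $R_k>2(r_k+B_{k-1})$, with $r_k\to\infty$. Assign each $F_{k-1}$-class to the $R_k$-component of $U^k_i$, with $i$ least, that it meets. This component is unique, because distinct components of one color are more than $R_k>B_{k-1}$ apart.
\item Let the $F_k$-classes be the unions of $F_{k-1}$-classes with the same assignment. There is no chaining, and $F_{k-1}\subseteq F_k$. The classes have diameter at most $D_k+2B_{k-1}$, where $D_k$ now bounds the $R_k$-components.
\item Every $r_k$-ball meets at most $n+1$ classes of $F_k$. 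Indeed, the components assigned to those classes all meet an $(r_k+B_{k-1})$-ball, so they include at most one component of each color.
\item The union $F_\infty=\bigcup_kF_k$ is hyperfinite, but it need not equal $E_G$. Instead, apply pigeonhole to $n+2$ points of one component lying in a large $r_k$-ball. This shows that every $E_G$-class is a union of at most $n+1$ classes of $F_\infty$.
\item The theorem of Jackson--Kechris--Louveau, that a finite-index extension of a hyperfinite equivalence relation is hyperfinite, then finishes the proof.
\end{itemize}
This finite-index step, or some other mechanism that forces each edge into $F_k$ at some stage, is what your argument is missing.
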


\begin{theorem}[Conley--Jackson--Marks--Seward--Tucker-Drob {\cite[Theorem 1.10]{CJMST23}}]\label{thm:union}
    Let $G_0 \subseteq G_1 \subseteq \cdots$ be an increasing sequence of  locally finite Borel graphs on a standard Borel space with path metric $\rho_n$.  
    If $G_n$ has finite Borel asymptotic dimension for every $n$ then $E:= \bigcup_{n}E_{G_n}$ is hyperfinite.
\end{theorem}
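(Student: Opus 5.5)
The plan is to build directly an increasing sequence of finite Borel equivalence relations $F_0\subseteq F_1\subseteq\cdots$ with $\bigcup_k F_k=E$. Theorem~\ref{thm:hyperfinite_of_dimension} alone does not suffice: it makes each $E_{G_n}$ hyperfinite, but it is not known in general whether an increasing union of hyperfinite relations is hyperfinite. So the finite dimension of each $G_n$ has to be used again at every stage.

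We aim for the following property at stage $k$:
\[
\text{for every } x,\quad B_{\rho_k}(x,k)\subseteq [x]_{F_k},
\]
where $B_{\rho_k}(x,k)$ is the $\rho_k$-ball. It suffices to produce $F_k$ Borel, containing $F_{k-1}$, and with classes of uniformly bounded $\rho_k$-diameter. Since $G_k$ is locally finite, bounded $\rho_k$-diameter classes are finite.

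This property gives $\bigcup_k F_k=E$. Suppose $x\mathrel{E_{G_j}}y$ with $\rho_j(x,y)=r$. Since $G_j\subseteq G_k$ for $k\ge j$, we have $\rho_k(x,y)\le r$. Hence $y\in B_{\rho_k}(x,k)\subseteq[x]_{F_k}$ once $k\ge\max(j,r)$. Conversely $F_k\subseteq E$, because every class of $F_k$ lies inside a $G_k$-component.

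For the inductive step, let $D$ bound the $\rho_k$-diameter of the classes of $F_{k-1}$. This is finite because those classes have bounded $\rho_{k-1}$-diameter and $\rho_k\le\rho_{k-1}$. Let $d=\asdimB(G_k)$.

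First we fix the cover. We use the standard equivalent form of $\asdimB\le d$ with a large Lebesgue number: Borel sets $U_0,\dots,U_d$ such that every ball $B_{\rho_k}(x,k)$ lies in some $U_i$, and the $r_i$-components of $U_i$ have $\rho_k$-diameter at most $C_i$. That this form can be obtained Borel-ly, by shrinking or thickening the sets of a cover at a larger scale, is a routine check.

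Next we merge in order $i=0,1,\dots,d$. Set $F^{(-1)}=F_{k-1}$. Let $F^{(i)}$ be the equivalence relation generated by $F^{(i-1)}$ together with all pairs lying in a common $r_i$-component of $U_i$ that meets a common saturated region. Concretely, we glue the $F^{(i-1)}$-saturations of the $r_i$-components of $U_i$. Suppose $F^{(i-1)}$-classes have diameter at most $D_{i-1}$ and we choose $r_i>2D_{i-1}$. Then two such saturated pieces can only chain together if the underlying components are within $r_i$, which does not happen for distinct components. So the new classes have diameter at most $C_i+2D_{i-1}$.

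The scales therefore have to be chosen recursively: $r_0$ is fixed in terms of $D$, then $C_0$ from the dimension bound, then $r_1$ in terms of $D_0=C_0+2D$, and so on. Each of these is a single application of the Borel cover at a chosen scale. Set $F_k=F^{(d)}$. Every ball $B_{\rho_k}(x,k)$ lies in a single $U_i$ and is $\rho_k$-connected, so it lies in one $r_i$-component of $U_i$, hence in one $F_k$-class. Borelness is preserved throughout, since all the operations involved (components at bounded scale, saturation under finite Borel relations) are Borel in locally finite graphs.

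I expect the main obstacle to be the merging step. The gluing must not cascade into unbounded classes, and we need both the Lebesgue-number property and the bounded components for the same cover. Interleaving the scales as above, with each scale chosen larger than twice the diameter bound achieved so far, is the step I would check most carefully. In particular I would verify that the growth of the saturations interacts correctly with the definition of $r_i$-components.
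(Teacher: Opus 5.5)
The paper gives no proof of this statement; it quotes \cite[Theorem~1.10]{CJMST23}. Your plan has a genuine gap, and it lies in the stage-$k$ target itself, which can never be achieved. If $k\ge1$ and $B_{\rho_k}(x,k)\subseteq[x]_{F_k}$ for every $x$, then every edge $\{x,y\}$ of $G_k$ lies in $F_k$. Since $F_k$ is an equivalence relation, this gives $E_{G_k}\subseteq F_k$, so every $G_k$-component lies in a single $F_k$-class. Bounded $F_k$-classes would then force every component of $G_k$ to be finite. That fails as soon as some $G_k$ has an infinite component, for instance the Schreier graph of $\N\curvearrowright\Z$. The same argument shows that an increasing sequence of finite relations cannot absorb pairs by a stage depending only on their distance. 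Hyperfiniteness only requires each pair to be absorbed eventually, at a stage that may depend on the pair.

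The step you flagged, the interleaving of scales, is exactly where the argument breaks. A single cover cannot have $r_1>2D_0$ together with your Lebesgue property. Take one $\Z$-line with $d=1$. Let $b$ be the largest point of an $r_1$-component of $U_1$, and let $b'$ be the next point of $U_1$. Then $b'-b>r_1$. The radius-$1$ balls about $b+1$ and $b'-1$ must lie in $U_0$, so $\{b,\dots,b'\}$ lies in one $r_0$-component of $U_0$. This gives $r_1<b'-b\le C_0$, contradicting $r_1>2D_0\ge 2C_0$. If instead you take each $U_i$ from a fresh cover at scale $r_i$, the sets need not cover $X$, and no single $U_i$ need contain a given ball. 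What you need is a non-uniform property. For example, it suffices to find finite Borel equivalence relations $F_k\subseteq E$, not necessarily increasing, such that every edge of every $G_j$ lies in $F_k$ for all sufficiently large $k$. Then $\bigcap_{m\ge k}F_m$ is an increasing exhaustion of $E$ by finite Borel relations. Producing such $F_k$ from finite Borel asymptotic dimension is the real work of the cited theorem, and your plan gives no mechanism for it.
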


\begin{corollary}[{Shinko--Weilacher--Yu \cite[Theorem 1.5]{SWY}}]\label{hyperfinite}
Every bounded-to-one Borel action of a countable commutative monoid has hyperfinite orbit equivalence relation.
\end{corollary}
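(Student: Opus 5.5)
We derive Corollary~\ref{hyperfinite} from Corollary~\ref{col:borel} and Theorem~\ref{thm:union}, reducing the countable case to finitely generated submonoids. Let $M$ be a countable commutative monoid acting on a standard Borel space $X$ by a bounded-to-one Borel action. Enumerate $M=\{a_0,a_1,a_2,\dots\}$ and let $M_n$ be the submonoid generated by $S_n=\{a_0,\dots,a_n\}$. Then $M_0\subseteq M_1\subseteq\cdots$ and $\bigcup_n M_n=M$. Each $M_n$ is a finitely generated commutative monoid. Restricting the action gives a Borel action $M_n\curvearrowright X$. This restricted action is still bounded-to-one, because every element of $M_n$ is an element of $M$ and so acts by a bounded-to-one map.

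Let $G_n:=G_{S_n}^X$ be the Schreier graph of this restricted action. It is a Borel graph: its edge set is a finite union of the graphs of the Borel maps $x\mapsto a_jx$, symmetrized and with loops removed. It is also locally finite. A vertex $x$ has at most $|S_n|$ forward neighbors $a_jx$. It has at most $\sum_{j\le n}|a_j^{-1}(x)|$ backward neighbors, and this is finite because each $a_j$ is bounded-to-one. Since $S_n\subseteq S_{n+1}$, we have $G_0\subseteq G_1\subseteq\cdots$. By Corollary~\ref{col:borel}, $\asdimB(G_n)\le\rk(M_n)<\infty$ for every $n$. Theorem~\ref{thm:union} then shows that $E:=\bigcup_n E_{G_n}$ is hyperfinite.

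It remains to check that $E=E_M^X$. First, $E_{G_n}$ is the equivalence relation generated by the pairs $(x,sx)$ with $s\in S_n$. Every element of $M_n$ is a product of generators, so $E_{G_n}$ equals $E_{M_n}^X$, the relation generated by all pairs $(x,ax)$ with $a\in M_n$. Hence $E_{G_n}\subseteq E_M^X$ for every $n$. Conversely, let $x\mathrel{E_M^X}y$, so that $ax=by$ for some $a,b\in M$. Both $a$ and $b$ lie in some common $M_n$, which gives $x\mathrel{E_{G_n}}ax=by\mathrel{E_{G_n}}y$, and therefore $(x,y)\in E$. Hence $E_M^X$ is hyperfinite.

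There is no real obstacle here. The one point to be careful about is that Theorem~\ref{thm:union} needs locally finite graphs. Local finiteness holds because the bounded-to-one hypothesis makes the backward degrees in $G_n$ finite.
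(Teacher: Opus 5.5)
Your proposal is correct and follows the paper's proof: you exhaust $M$ by increasing finitely generated submonoids $\langle S_n\rangle$, apply Corollary~\ref{col:borel} to get $\asdimB(G_{S_n}^X)<\infty$, and conclude with Theorem~\ref{thm:union}. You also spell out two points the paper leaves implicit, namely that the Schreier graphs are locally finite and that $\bigcup_n E_{G_n}=E_M^X$.
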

\begin{proof}
Given a bounded-to-one action $M\curvearrowright X$,
    choose increasing finite sets $S_j\subseteq M$ with union $M$ and put $M_j=\langle S_j\rangle$. The locally finite Borel graphs $G_j=G_{S_j}^X$ are increasing, and Corollary~\ref{col:borel} gives $\asdimB(G_j) < \infty$. Then Theorem \ref{thm:union} implies that  $\bigcup_j E_{G_j}=E_M^X$ is hyperfinite.
\end{proof}

\subsection*{Note added}
The results of this paper were obtained independently of Ruijun Wang's work \cite{Wang};  A first version of our argument, formulated in terms of commuting maps, was completed in August 2026. Ruijun Wang proved the bound
\[
 \asdim(M\curvearrowright X)
 \le \frac{3^{\rk(M)+1}-3}{2}.
\]
Both proofs separate locally free points from relation sets, but use different geometric inputs: Wang uses greedy lattice markers and ball-multiplicity estimates, whereas we use a sharp dynamic asymptotic-dimension partition and color-preserving gluing.

\section{Algebraic preliminaries on monoids}\label{sec:pre}

We first recall the algebraic facts that will be used in the proof. 

\begin{definition}
A \emphd{monoid} is a set $M$ with an associative multiplication and an identity element $1$. It is \emphd{commutative} if $ab=ba$ for all $a,b\in M$. A \emphd{homomorphism} preserves multiplication and the identity. A \emphd{generating set} $S\subseteq M$ is a subset such that every element is a product of elements of $S$, where the empty product is $1$. The monoid is \emphd{finitely generated} if such a set can be chosen finite.
\end{definition}

All monoids below are commutative. For a finite generating set $S$, define the word length function
\[
 \ell_S(a)=\min\{j\geq0:a=s_1\cdots s_j,\ s_1,\ldots,s_j\in S\}.
\]
We may assume without loss of generality that $1\in S$.

\begin{definition}
An \emphd{action} $M\curvearrowright X$ assigns to every $a\in M$ a map $x\mapsto ax$ such that $1x=x$ and $(ab)x=a(bx)$. A subset $Y\subseteq X$ is \emphd{invariant} if $aY\subseteq Y$ for every $a\in M$. The action is \emphd{free} if $ax=bx$ implies $a=b$ for every $x\in X$. For a group action, this means that no nonidentity element fixes a point. A map is \emphd{bounded-to-one} if there is a finite upper bound for the cardinalities of its fibers. An action is bounded-to-one if every acting map has this property, with the bound allowed to depend on the monoid element.
\end{definition}

Throughout the paper, all monoid actions are assumed to be bounded-to-one.

If each generator is at most $K$-to-one, the map associated to $a$ is at most $K^{\ell_S(a)}$-to-one.

\begin{definition}[Congruences and quotient actions]
A \emphd{congruence} on $M$ is an equivalence relation $\theta$
compatible with multiplication:
\[
    a\mathrel{\theta} b
    \quad\Longrightarrow\quad
    ca\mathrel{\theta} cb
    \qquad(a,b,c\in M).
\]
The quotient $M/\theta$ is the monoid of $\theta$-equivalence
classes, with multiplication
\[
    [a]_\theta[b]_\theta=[ab]_\theta.
\]

Given $\mathcal R\subseteq M\times M$, the
\emphd{congruence generated by $\mathcal R$} is the smallest
congruence containing $\mathcal R$, equivalently the intersection
of all congruences containing $\mathcal R$. We write
\[
    \langle a=b\rangle
\]
for the congruence generated by the pair $(a,b)$.

An action of $M$ on a set $Y$ \emphd{factors through} $M/\theta$
if
\[
    ay=by
    \qquad
    \text{whenever }a\mathrel{\theta}b\text{ and }y\in Y.
\]
In that case the quotient action is defined by
\[
    [a]_\theta y=ay.
\]
\end{definition}

\begin{lemma}\label{lemma:relation}
For $a,b\in M$, the set
\[
 \period_{a,b}=\{x\in X:ax=bx\}
\]
is invariant, and its action factors through $M/\langle a=b\rangle$. Its generating maps are the restrictions of the original generating maps.
\end{lemma}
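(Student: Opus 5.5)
The statement has three parts: invariance of $\period_{a,b}$, the fact that the restricted action factors through $M/\langle a=b\rangle$, and the remark about generating maps. I would prove them in that order, using only commutativity and the definition of the generated congruence.

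\textbf{Invariance.} Take $x\in\period_{a,b}$ and $c\in M$. By commutativity and the action axioms,
\[
a(cx)=(ac)x=(ca)x=c(ax)=c(bx)=(cb)x=(bc)x=b(cx).
\]
Hence $cx\in\period_{a,b}$, so $c\,\period_{a,b}\subseteq\period_{a,b}$.

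\textbf{Factoring through the quotient.} Define a relation $\theta$ on $M$ by
\[
u\mathrel{\theta} v \iff uy=vy \text{ for all } y\in\period_{a,b}.
\]
This is clearly an equivalence relation, and it contains the pair $(a,b)$ by the definition of $\period_{a,b}$. It is compatible with multiplication, directly from the action axioms. If $u\mathrel{\theta}v$ and $y\in\period_{a,b}$, then
\[
(cu)y=c(uy)=c(vy)=(cv)y.
\]
So $\theta$ is a congruence containing $(a,b)$. Since $\langle a=b\rangle$ is the smallest such congruence, $\langle a=b\rangle\subseteq\theta$. Therefore $u\mathrel{\langle a=b\rangle}v$ implies $uy=vy$ for every $y\in\period_{a,b}$, which is exactly the statement that the action on $\period_{a,b}$ factors through $M/\langle a=b\rangle$.

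We do not need invariance for compatibility here. We only need it so that the restriction of the action to $\period_{a,b}$ is well defined.

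\textbf{Generating maps.} The quotient map $M\to M/\langle a=b\rangle$ is surjective, so the classes $[s]$ for $s\in S$ generate the quotient monoid. By definition, $[s]y=sy$. Thus the generating maps of the quotient action on $\period_{a,b}$ are the restrictions of the original generating maps. In particular, the Schreier graph of the quotient action with respect to $\{[s]:s\in S\}$ is the induced subgraph of $G_S^X$ on $\period_{a,b}$, and fiber bounds are inherited.

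No step is a real obstacle. The only point to handle with care is the order of the factoring argument: define the congruence $\theta$ first, then use minimality of $\langle a=b\rangle$, rather than trying to describe the elements of $\langle a=b\rangle$ explicitly.
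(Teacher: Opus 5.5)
Your proposal is correct and follows essentially the same route as the paper: invariance by commutativity, then the observation that the pairs $(u,v)$ with $uy=vy$ on $\period_{a,b}$ form a congruence containing $(a,b)$, hence containing $\langle a=b\rangle$, with the generating-map assertion immediate from the definition of the quotient action. Your explicit compatibility check and your remark that invariance is needed only for the restriction to be an action are accurate elaborations of steps the paper leaves implicit.
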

\begin{proof}
If $ax=bx$, then $a(cx)=c(ax)=c(bx)=b(cx)$ for every $c\in M$. Thus the set is invariant. The pairs of elements whose maps agree on this set form a congruence containing $(a,b)$, so they contain $\langle a=b\rangle$. This proves that the quotient action is well defined. The assertion about generating maps follows from the definition.
\end{proof}

\begin{definition}
A monoid is \emphd{cancellative} if $ac=bc$ implies $a=b$. A \emphd{group} is a monoid in which every element has an inverse. The \emphd{group completion} $\Gamma=M^{\gp}$ of a commutative monoid is the abelian group of formal fractions $[a,b]$, $a,b\in M$, with
\[
 [a,b]=[a',b']\quad\Longleftrightarrow\quad
 ab'c=a'bc\text{ for some }c\in M.
\]
Its multiplication, inverse, and natural homomorphism are
\[
 [a,b][a',b']=[aa',bb'],\qquad
 [a,b]^{-1}=[b,a],\qquad \iota(a)=[a,1].
\]
\end{definition}

\begin{lemma}\label{lemma:completion}
The map $\iota$ is injective if and only if $M$ is cancellative. If $M$ is finitely generated, then
\[
 \Gamma=M^{\gp}\cong\Z^n\oplus T
\]
for some finite abelian group $T$. We define $\rk(M)=n$.
\end{lemma}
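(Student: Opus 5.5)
The plan has two parts: first the injectivity criterion, then the structure of $\Gamma$.

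For the equivalence, I will use the definition of equality of formal fractions. By definition, $\iota(a)=\iota(b)$ means $[a,1]=[b,1]$. This holds exactly when $ac=bc$ for some $c\in M$.
\begin{itemize}
\item If $M$ is cancellative, then $ac=bc$ forces $a=b$, so $\iota$ is injective.
\item Conversely, suppose $\iota$ is injective and $ac=bc$. Then $[a,1]=[b,1]$, using the witness $c$. Injectivity gives $a=b$, so $M$ is cancellative.
\end{itemize}
Before this I will check once that the stated relation on pairs really is an equivalence relation. Reflexivity and symmetry are immediate. For transitivity, suppose $ab'c=a'bc$ and $a'b''c'=a''b'c'$. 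Multiplying and cancelling nothing, the witness $b'cc'$ gives $ab''(b'cc')=a''b(b'cc')$. I will also check that multiplication is well defined. Both verifications are routine uses of commutativity.

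For the structure statement, let $S=\{s_1,\dots,s_k\}$ generate $M$. Then $\Gamma$ is generated as a group by $\iota(s_1),\dots,\iota(s_k)$. The reason is that every element has the form $[a,b]=\iota(a)\iota(b)^{-1}$, and $a$, $b$ are products of generators. Hence $\Gamma$ is a finitely generated abelian group.

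The structure theorem for finitely generated abelian groups (a quotient of $\Z^k$, via Smith normal form) then gives $\Gamma\cong\Z^n\oplus T$ with $T$ finite. The integer $n$ is well defined: it equals $\dim_{\Q}(\Gamma\otimes\Q)$, or equivalently the rank of $\Gamma/T$ where $T$ is the torsion subgroup. This makes $\rk(M)=n$ a legitimate definition.

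There is no real obstacle here. The only step requiring care is the transitivity and well-definedness check for the fraction relation, where one must carry the auxiliary factor $c$ along rather than assume cancellation.
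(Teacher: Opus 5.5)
Your proposal is correct and follows essentially the same route as the paper: specializing the fraction relation to $[a,1]=[b,1]$ gives $\iota(a)=\iota(b)$ if and only if $ac=bc$ for some $c\in M$, and the images of a finite generating set generate $M^{\gp}$ as a group, so the structure theorem applies. Your extra checks are details the paper leaves implicit or records only as a remark: transitivity of the fraction relation via the witness $b'cc'$, and well-definedness of $n$ as $\dim_{\Q}(\Gamma\otimes\Q)$.
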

\begin{proof}
The fraction construction gives
\[
 \iota(a)=\iota(b)\quad\Longleftrightarrow\quad
 ac=bc\text{ for some }c\in M,
\]
which proves the first assertion. The images of a finite generating set generate $M^{\gp}$ as a group. The structure theorem for finitely generated abelian groups gives the stated decomposition; see \cite[Section~2]{SWY}. Equivalently,
\[
 \rk(M)=\dim_{\Q}(M^{\gp}\otimes_{\Z}\Q).\qedhere
\]
\end{proof}

Only when $M$ is cancellative do we identify it with a submonoid of $\Gamma=M^{\gp}$. In that case we write $|u|_\Gamma$ for the word length in the group generators $S\cup S^{-1}$, and put
\[
 B_\Gamma(t)=\{u\in\Gamma:|u|_\Gamma\leq t\}.
\]
These sets are finite. The Cayley graph has edges $\{u,su\}$ for $s\in S$, and its distance is $d(u,v)=|u^{-1}v|_\Gamma$. We have $|a|_\Gamma\leq\ell_S(a)$ for $a\in M$; the two lengths need not agree.

\begin{lemma}\label{lemma:denominator}
Let $M$ be cancellative. For every finite set $E\subseteq\Gamma$, there is $q\in M$ such that $qE\subseteq M$.
\end{lemma}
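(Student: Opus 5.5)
Every element of $\Gamma$ is a fraction $u=[a,b]=\iota(a)\iota(b)^{-1}$ with $a,b\in M$; since $M$ is cancellative we identify $M$ with its image $\iota(M)\subseteq\Gamma$, so $u=ab^{-1}$. The plan is to clear denominators one element at a time and multiply the denominators together.

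Write $E=\{u_1,\dots,u_k\}$ and choose for each $i$ a representation $u_i=a_ib_i^{-1}$ with $a_i,b_i\in M$. Put $q=b_1b_2\cdots b_k\in M$; if $E$ is empty, take $q=1$. Using commutativity of $\Gamma$, we get
\[
 qu_i=a_i\prod_{j\neq i}b_j,
\]
which is a product of elements of $M$ and hence lies in $M$. Therefore $qE\subseteq M$.

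There is no real obstacle here. The only point needing care is that the identification of $M$ with a submonoid of $\Gamma$ requires cancellativity, by Lemma~\ref{lemma:completion}. This is what guarantees that ``$qu_i\in M$'' is a meaningful statement, namely that $qu_i$ equals $\iota$ of a unique element of $M$, and that products computed in $\Gamma$ agree with products in $M$ because $\iota$ is a homomorphism.
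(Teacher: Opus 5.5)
Your proposal is correct and is essentially the paper's own argument. Both write each $u=a_ub_u^{-1}$, take $q$ to be the product of all the denominators (with $q=1$ when $E=\emptyset$), and observe that $qu=a_u\prod_{v\neq u}b_v\in M$.
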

\begin{proof}
Write $u=a_ub_u^{-1}$ with $a_u,b_u\in M$ for each $u\in E$. Take $q=\prod_{u\in E}b_u$. Then
\[
 qu=a_u\prod_{v\in E\setminus\{u\}}b_v\in M.
\]
For $E=\emptyset$, take $q=1$.
\end{proof}

We will use R\'edei's theorem in the following form \cite{Redei}; see also \cite[Theorem~2.1]{SWY}.
\begin{theorem}[R\'edei]\label{thm:redei}
Every congruence on a finitely generated commutative monoid is generated by finitely many pairs.
\end{theorem}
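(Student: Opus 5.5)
The statement immediately above is R\'edei's theorem, which the paper cites from \cite{Redei} rather than proves. If I had to supply a proof, I would use the classical reduction to Hilbert's basis theorem via monoid algebras. Let $M$ be generated by $s_1,\dots,s_k$, so that $M$ is a quotient of the free commutative monoid $\N^k$. Congruences on $M$ pull back bijectively to congruences on $\N^k$ containing the kernel congruence of $\N^k\to M$. A finite generating set upstairs therefore maps to a finite generating set downstairs, since images of generating pairs generate the image congruence. So it suffices to treat $M=\N^k$, provided the kernel congruence itself is finitely generated; that is a special case of the same claim for $\N^k$.

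For $M=\N^k$ and a field $F$, I would consider the polynomial ring $F[x_1,\dots,x_k]=F[\N^k]$. To a congruence $\theta$ I attach the ideal
\[
 I_\theta=\operatorname{span}_F\{x^a-x^b : a\mathrel{\theta} b\}.
\]
This span is an ideal because $\theta$ is compatible with multiplication. The key step is to recover $\theta$ from $I_\theta$: I would show $a\mathrel{\theta}b$ if and only if $x^a-x^b\in I_\theta$. For this, consider the linear map $F[\N^k]\to F[\N^k/\theta]$ sending $x^a\mapsto[a]_\theta$. It kills $I_\theta$, and if $x^a-x^b$ lies in its kernel then $[a]=[b]$, because distinct classes form a basis of the target. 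By Hilbert's basis theorem, $I_\theta$ is generated as an ideal by finitely many elements, and I can choose them of the form $x^{a_i}-x^{b_i}$ with $a_i\mathrel\theta b_i$, since the spanning set generates.

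Let $\theta'$ be the congruence generated by the pairs $(a_i,b_i)$. Then $\theta'\subseteq\theta$ and $I_{\theta'}\supseteq(x^{a_i}-x^{b_i})=I_\theta$. By the recovery statement applied to $\theta'$, any $a\mathrel\theta b$ gives $x^a-x^b\in I_\theta\subseteq I_{\theta'}$, hence $a\mathrel{\theta'}b$. Therefore $\theta=\theta'$ is finitely generated.

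The main obstacle is the recovery lemma for the generated congruence $\theta'$. One must check that $I_{\theta'}$, defined as a span, coincides with the ideal generated by the $x^{a_i}-x^{b_i}$. This holds because multiplying $x^{a_i}-x^{b_i}$ by a monomial $x^c$ gives $x^{a_i+c}-x^{b_i+c}$, which corresponds to a pair in $\theta'$. A general element of the ideal is an $F$-combination of such terms. With that identification, the argument closes.
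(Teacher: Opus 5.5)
The paper gives no proof of this statement: it quotes R\'edei's theorem from \cite{Redei} and \cite[Theorem~2.1]{SWY} and uses it only as a black box in Lemma~\ref{lemma:cancellationAlgebra}. Your argument is correct, and it is the standard short proof via Hilbert's basis theorem. The key point is the injectivity of $\theta\mapsto I_\theta$, namely $a\mathrel{\theta}b$ if and only if $x^a-x^b\in I_\theta$, detected by the linear map to $F[\N^k/\theta]$. Noetherianity of $F[x_1,\dots,x_k]$ then supplies finitely many generating binomials. Compared with R\'edei's original combinatorial proof, your route trades a long argument for one paragraph plus the basis theorem.

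Two of your steps can be streamlined. First, the proviso in your reduction to $\N^k$, that the kernel congruence of $\pi\colon\N^k\to M$ be finitely generated, is never used. You only need the pullback $\tilde\theta$ of $\theta$ to be finitely generated, say by pairs $(a_i,b_i)$. If $\theta''$ is the congruence on $M$ generated by the $(\pi a_i,\pi b_i)$, then $(\pi\times\pi)^{-1}(\theta'')$ is a congruence containing every $(a_i,b_i)$, hence containing $\tilde\theta$, so $\theta\subseteq\theta''$. This is the one-line justification of your claim that images of generating pairs generate the image congruence.

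Second, the ``main obstacle'' in your last paragraph is not actually an obstacle. You only use the inclusion $(x^{a_i}-x^{b_i})\subseteq I_{\theta'}$. This is immediate because $I_{\theta'}$ is an ideal, by the same reasoning you gave for $I_\theta$, and it contains these binomials. Equality of the span with the generated ideal is therefore a consequence rather than a prerequisite, since $I_\theta\subseteq I_{\theta'}\subseteq I_\theta$.
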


\begin{lemma}\label{lemma:cancellationAlgebra}
Let $M$ be a finitely generated commutative monoid, and let $N=\iota(M)\subseteq M^{\gp}$. Then $N$ is cancellative, $N^{\gp}=M^{\gp}$, and there exists $c\in M$ such that
\[
 \iota(a)=\iota(b)\quad\Longrightarrow\quad ac=bc
 \qquad(a,b\in M).
\]
\end{lemma}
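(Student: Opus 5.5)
The plan is to get the three assertions from the kernel of $\iota$, which is a congruence, and then apply R\'edei's theorem.

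\textbf{Cancellativity of $N$ and $N^{\gp}=M^{\gp}$.} The submonoid $N=\iota(M)$ sits inside the abelian group $M^{\gp}$, and every submonoid of a group is cancellative: if $uw=vw$ in $N$, multiply by $w^{-1}$ in the group. Next, $N^{\gp}$ is the subgroup generated by $N$, because for a cancellative submonoid of a group the fraction construction is realized by $[u,v]\mapsto uv^{-1}$. This subgroup is all of $M^{\gp}$, since every element of $M^{\gp}$ has the form $[a,b]=\iota(a)\iota(b)^{-1}$. One can also argue through universal properties: any homomorphism from $N$ to a group composes with $\iota$ to a homomorphism from $M$, and conversely any homomorphism from $M$ to a group kills the kernel of $\iota$.

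\textbf{The element $c$.} By Lemma~\ref{lemma:completion}, $\iota(a)=\iota(b)$ holds if and only if $ae=be$ for some $e\in M$. Let $\theta$ be the relation $a\mathrel\theta b\iff\iota(a)=\iota(b)$. It is a congruence on $M$ because $\iota$ is a homomorphism.

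By R\'edei's theorem (Theorem~\ref{thm:redei}), $\theta$ is generated by finitely many pairs $(a_1,b_1),\dots,(a_m,b_m)$. For each $i$ pick $e_i\in M$ with $a_ie_i=b_ie_i$, and set $c=e_1\cdots e_m$ (with $c=1$ if $m=0$). By commutativity, $a_ic=b_ic$ for every $i$.

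Now consider the relation $\sigma=\{(a,b):ac=bc\}$. It is an equivalence relation, and it is compatible with multiplication: $ac=bc$ implies $(da)c=d(ac)=d(bc)=(db)c$. So $\sigma$ is a congruence containing every generating pair, hence $\theta\subseteq\sigma$. This is exactly the required implication.

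\textbf{Main obstacle.} The only nontrivial input is passing from one witness per pair to a uniform witness. This is precisely what finite generation of the congruence via R\'edei provides, combined with the observation that $\sigma$ is itself a congruence, so it suffices to check $\sigma$ on generators. Everything else is routine bookkeeping with fractions.
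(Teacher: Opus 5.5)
Your proposal is correct and follows essentially the same route as the paper: you apply R\'edei's theorem to the kernel congruence of $\iota$, take the product of one witness per generating pair, and observe that $\{(a,b):ac=bc\}$ is a congruence containing the generators. You then get cancellativity and $N^{\gp}=M^{\gp}$ from $N$ sitting inside the group $M^{\gp}$. Your write-up of the last point is somewhat more detailed than the paper's.
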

\begin{proof}
The relation $\iota(a)=\iota(b)$ is a congruence. By Theorem~\ref{thm:redei}, it is generated by finitely many pairs $(a_j,b_j)$. For each pair, the fraction construction supplies $c_j\in M$ with $a_jc_j=b_jc_j$. Put $c=\prod_j c_j$, using $c=1$ if there are no pairs. The relation $ac=bc$ is itself a congruence, by commutativity, and contains every generating pair. It therefore contains the kernel congruence of $\iota$, proving the assertion. The monoid $N$ is a submonoid of a group, hence cancellative, and its fractions are exactly the elements of $M^{\gp}$.
\end{proof}

\begin{lemma}\label{lemma:powersAlgebra}
Let $M$ be cancellative, with finite generating set $S=\{s_1,\ldots,s_k\}$. Write $\Gamma=M^{\gp}\cong\Z^n\oplus T$ as in Lemma~\ref{lemma:completion}. Choose $p\geq1$ such that $t^p=1$ for every $t\in T$. The submonoid
\[
 N=\langle s_1^p,\ldots,s_k^p\rangle\subseteq M
\]
is cancellative and has torsion-free group completion of rank $n$.
\end{lemma}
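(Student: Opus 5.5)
Since $M$ is cancellative, we identify it with a submonoid of $\Gamma=M^{\gp}$ via $\iota$, which is injective by Lemma~\ref{lemma:completion}. Then $N=\langle s_1^p,\ldots,s_k^p\rangle$ is a submonoid of the abelian group $\Gamma$. Any submonoid of a group is cancellative: if $ac=bc$ in $N$, multiplying by $c^{-1}$ in $\Gamma$ gives $a=b$. This settles the first assertion.

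For the group completion, $N^{\gp}$ is the subgroup $\Gamma_p\subseteq\Gamma$ generated by $s_1^p,\ldots,s_k^p$. It is legitimate to identify $N^{\gp}$ with the subgroup of $\Gamma$ generated by $N$: fractions $[a,b]$ with $a,b\in N$ map to $ab^{-1}\in\Gamma$, and this map is injective by cancellativity. Since $\Gamma$ is abelian and generated by $s_1,\ldots,s_k$, every element has the form $g=\prod s_i^{m_i}$ with $m_i\in\Z$, and then $g^p=\prod (s_i^p)^{m_i}$ lies in $\Gamma_p$. Thus $\Gamma_p\supseteq\Gamma^p:=\{g^p:g\in\Gamma\}$, and conversely $\Gamma_p\subseteq\Gamma^p$, because $\Gamma^p$ is a subgroup of the abelian group $\Gamma$ and contains each $s_i^p$. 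Hence $N^{\gp}=\Gamma^p$.

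Under $\Gamma\cong\Z^n\oplus T$, the subgroup $\Gamma^p$ corresponds to $p\Z^n\oplus pT$ in additive notation. By the choice of $p$ we have $pT=0$, so $\Gamma^p\cong p\Z^n\cong\Z^n$. This group is torsion-free of rank $n$, which gives the second assertion.

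The only step requiring care is the identification $N^{\gp}=\Gamma^p$, which depends on commutativity to rewrite $g^p$ as a product of powers of the $s_i^p$. The remaining steps are immediate.
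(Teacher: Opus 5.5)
Your proof is correct and follows the paper's own argument. In both, $N^{\gp}$ is identified with the subgroup of $\Gamma$ generated by the $s_j^p$, which is the image of $u\mapsto u^p$ and becomes $p\Z^n\oplus\{1\}$ under $\Gamma\cong\Z^n\oplus T$; cancellativity comes from $N\subseteq\Gamma$, and you additionally justify the identification of $N^{\gp}$ with this subgroup, which the paper takes for granted.
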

\begin{proof}
Its group completion identifies with the subgroup generated by the elements $s_j^p$ in $\Gamma$. Since $S$ generates $\Gamma$ as a group, this is the image of the homomorphism $u\mapsto u^p$. Under $\Gamma\cong\Z^n\oplus T$, this image is $p\Z^n\oplus\{1\}$, which is torsion-free of rank $n$. Cancellativity follows from $N\subseteq\Gamma$.
\end{proof}

\begin{lemma}\label{lemma:rankDrop}
Let $M$ be cancellative with torsion-free group completion of rank $n$. If $a,b\in M$ are distinct, then
\[
 \rk\bigl(M/\langle a=b\rangle\bigr)=n-1.
\]
\end{lemma}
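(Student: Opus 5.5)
The plan is to compute the group completion of the quotient explicitly and then read off its rank. Write $\theta=\langle a=b\rangle$, $M'=M/\theta$, and $u=\iota(a)\iota(b)^{-1}\in\Gamma$. I would show that
\[
 (M')^{\gp}\;\cong\;\Gamma/\langle u\rangle ,
\]
where $\langle u\rangle$ is the cyclic subgroup generated by $u$. The rank computation is then a short linear-algebra step.

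For the isomorphism, I would first record the universal property of the fraction construction: every monoid homomorphism $\phi\colon M\to A$ into an abelian group $A$ factors uniquely through $\iota$, via $[x,y]\mapsto\phi(x)\phi(y)^{-1}$. This is well defined, because $xy'c=x'yc$ gives $\phi(x)\phi(y')=\phi(x')\phi(y)$ after cancelling $\phi(c)$ in the group $A$. Next, a homomorphism $M'\to A$ is the same as a homomorphism $\phi\colon M\to A$ whose kernel congruence contains $\theta$. Since that kernel congruence is a congruence, this holds exactly when $\phi(a)=\phi(b)$, by minimality of $\theta$. In turn, this says exactly that the extension $\Gamma\to A$ kills $u$.

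So $(M')^{\gp}$ and $\Gamma/\langle u\rangle$ both represent the functor $A\mapsto\{\phi\colon M\to A:\phi(a)=\phi(b)\}$ on abelian groups. Hence they are isomorphic, and one can write the isomorphism directly as $[[x]_\theta,[y]_\theta]\mapsto \iota(x)\iota(y)^{-1}\langle u\rangle$. I expect this step to be the main point needing care. The subtlety is that $M'$ need not be cancellative, so one must work with the fraction construction of $M'$ rather than embed $M'$ in a group. Arguing through the universal property avoids any concrete description of $\theta$.

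For the rank, I would tensor with $\Q$, which is exact. This gives
\[
 (M')^{\gp}\otimes\Q\cong (\Gamma\otimes\Q)/\Q\,(u\otimes1)\cong \Q^n/\Q\,(u\otimes 1).
\]
Since $M$ is cancellative, $\iota$ is injective by Lemma~\ref{lemma:completion}, so $a\neq b$ gives $u\neq1$. Because $\Gamma\cong\Z^n$ is torsion-free, $u\otimes1\neq0$ in $\Gamma\otimes\Q$. The quotient therefore has $\Q$-dimension $n-1$, which is $\rk(M')=n-1$ by the formula $\rk=\dim_\Q(\,\cdot^{\gp}\otimes\Q)$ from Lemma~\ref{lemma:completion}.
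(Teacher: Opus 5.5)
Your proposal is correct and matches the paper's proof. Both identify $(M/\langle a=b\rangle)^{\gp}$ with $M^{\gp}/\langle ab^{-1}\rangle$ using the universal properties of group completion and of the generated congruence, and then use that $ab^{-1}\neq 1$ has infinite order in the torsion-free completion, so the rank drops by exactly one. Your well-definedness check and the tensoring with $\Q$ just spell out steps the paper states in a line.
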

\begin{proof}
The group completion of $M/\langle a=b\rangle$ is
\[
 M^{\gp}/\langle ab^{-1}\rangle.
\]
Indeed, homomorphisms from either group to an abelian group correspond exactly to homomorphisms from $M$ identifying $a$ and $b$, by the defining property of group completion and of the generated congruence. Since $M$ embeds in its torsion-free completion, $ab^{-1}\ne1$ has infinite order. Quotienting a finitely generated abelian group by this cyclic subgroup lowers its rank by exactly one.
\end{proof}

\section{Outline of the proof}\label{sec:decomposition}

The proof has two main steps. We first reduce to the situation where the monoid is cancellative and has torsion-free group completion. We then divide the space into a period part, where a short relation holds, and a free part, where no such relation holds. 
In this section, we explain these steps and how their estimates fit together. The reduction is proved in Section~\ref{sec:reduction},  the period part estimate in Section~\ref{sec:period}, and the free part estimate in Section~\ref{sec:free}. 

We begin by fixing the metric terminology used in the argument.

Let $M\curvearrowright X$ be an action and let $S$ be a finite generating set. We define edges $\{x,sx\}$, $x\in X$, $s\in S$, and denote the graph metric by $d$. Loops are discarded. The distance between points in different connected components is $\infty$. All subspace distances below are restrictions of this metric, unless another metric is specified.

\begin{definition}\label{def:asymDim}
Let $(X,d)$ be a metric space, possibly with infinite distances. An $r$-chain in a subset is a finite sequence of its points with successive distances at most $r$. Its $r$-connected components are the equivalence classes generated by these chains. We say that $(X,d)$ has \emphd{asymptotic dimension at most $n$} if for every $r>0$ there is a cover $U_0,\ldots,U_n$ of $X$ such that every $r$-connected component of each $U_i$ has diameter at most some $R(r)<\infty$. We call $R$ a \emphd{control function}. At a single prescribed scale, we say that the space has asymptotic dimension at most $n$ at scale $r$ with control $R(r)$.
\end{definition}

A cover can be made a partition by assigning each point to its first member. Empty members are allowed. 
We write $\asdim(M\curvearrowright X)$ for the asymptotic dimension of this graph. 

\subsection{Reduction to a cancellative monoid with torsion-free completion}

We first arrange that the monoid embeds in a free abelian group. The algebraic results of Section~2 suggest two operations: pass to the cancellative image $\iota(M)$, and then take powers of its generators to remove torsion from the completion. For an action on $X$, the first operation is carried out on a suitable invariant image $\mathcal T=cX$. Every point of $X$ lies a bounded distance from this image. The next lemma states that these two operations preserve exactly the asymptotic dimension needed in the proof.

\begin{lemma}\label{lemma:reduction}
Let $M$ be a finitely generated commutative monoid, with generating set $S=\{s_1,\ldots,s_k\}$ and natural map $\iota\colon M\to M^{\gp}$. Choose $p\geq1$ annihilating the torsion subgroup of $M^{\gp}$, and put
\[
 N=\langle\iota(s_1)^p,\ldots,\iota(s_k)^p\rangle\subseteq M^{\gp}.
\]
There exists $c\in M$, depending only on $M$, such that every action $M\curvearrowright X$ induces an action $N\curvearrowright\mathcal T$ on $\mathcal T=cX$ and
\[
 \asdim(M\curvearrowright X)=\asdim(N\curvearrowright\mathcal T).
\]
The monoid $N$ is cancellative, its group completion is torsion-free, and $\rk(N)=\rk(M)$. If the generators in $S$ have fiber bound $K$, the displayed generators of $N$ have fiber bound $K^p$ on $\mathcal T$. For a fixed dimension bound, controls transfer in both directions with dependence only on $M,S,p$ and the given control.
\end{lemma}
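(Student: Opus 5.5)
We take $c\in M$ from Lemma~\ref{lemma:cancellationAlgebra}, so that $\iota(a)=\iota(b)$ implies $ac=bc$. Put $\mathcal T=cX$. It is invariant, since $a(cx)=c(ax)$.

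\textbf{Step 1: the induced action.} On $\mathcal T$ the action of $M$ factors through $\iota$. Indeed, if $\iota(a)=\iota(b)$ and $y=cx$, then $ay=acx=bcx=by$. So $N_0=\iota(M)$ acts on $\mathcal T$ by $\iota(a)y=ay$. Restricting to the submonoid $N\subseteq N_0$ gives the action $N\curvearrowright\mathcal T$, where $\iota(s_j)^p$ acts as $s_j^p$. Each $s_j^p$ is at most $K^p$-to-one on $X$, hence on $\mathcal T$. The algebraic claims about $N$ come from Lemma~\ref{lemma:cancellationAlgebra} ($N_0$ is cancellative with $N_0^{\gp}=M^{\gp}$) and Lemma~\ref{lemma:powersAlgebra} applied to $N_0$ with generators $\iota(s_j)$. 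Together these give cancellativity, torsion-free completion, and $\rk(N)=\rk(M)$.

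\textbf{Step 2: comparing metrics.} Let $d_X$ be the Schreier metric of $S$ on $X$ and $d_N$ that of $\{s_j^p\}$ on $\mathcal T$. We show $\mathcal T$ with $d_N$ is coarsely equivalent to $X$ with $d_X$. Two facts are immediate:
\begin{itemize}
\item each $s_j^p$-edge is a path of length $p$ in $X$, so $d_X\le p\,d_N$ on $\mathcal T$;
\item every $x$ lies within $d_X$-distance $L=\ell_S(c)$ of $cx\in\mathcal T$, so $\mathcal T$ is $L$-dense.
\end{itemize}
The reverse inequality is the main obstacle. An $S$-edge $\{x,sx\}$ of $X$ need not stay in $\mathcal T$. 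We map it to $\{cx,csx\}=\{y,sy\}$ with $y=cx\in\mathcal T$. So it suffices to bound $d_N(y,sy)$ for $y\in\mathcal T$ and $s\in S$, and more generally $d_N(y,ay)$ for $a\in M$ of bounded length.

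In $\Gamma=M^{\gp}$, the element $\iota(s)$ lies in $N^{\gp}$ only up to torsion, so we cannot write it as a fraction of $N$. We use a common multiple instead. Since $s^p\in N$, take $w=s^{p-1}$. Then $y$ and $sy$ are both joined by $N$-paths to $s^p y$: the point $y$ reaches $s^py$ in one $N$-edge, and $sy$ reaches $s^{p}(sy)$ in one $N$-edge. We still need to connect $s^py$ and $s^{p+1}y$; this is the same problem, so this approach is circular.

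The remedy is a single distance estimate $d_N(y,uy)\le C$, where $u$ ranges over short elements of $M$. Apply Lemma~\ref{lemma:denominator} to $N$ and the finite set $E=\{\iota(s)\}\subseteq\Gamma$. The element $\iota(s)^p$ lies in $N$, but we need $\iota(s)$ itself as a fraction of $N$, and it is one only up to torsion. Instead, connect $y$ and $sy$ through a common point. Both $s^{p}y$ and $s^{p}(sy)=s\cdot s^py$ are $N$-adjacent to $y$ and $sy$ respectively. Iterating gives $s^{mp}y$ and $s^{mp}sy$, which never meet. So we need a genuine relation: both $y$ and $sy$ lie in $c X$, say $y=cx$. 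Then $sy=s c x$, and $y$ connects to $ny$ for $n\in N$ by bounded $N$-paths.

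Here the torsion matters. In the torsion-free rank, $\iota(s)^p\in N^{\gp}$, but $\iota(s)$ may not be. Hence $d_N(y,sy)$ can be infinite: for example, if $s$ has order $2$ in the group, $y$ and $sy$ may lie in different $N$-components. We therefore interpret $\asdim(N\curvearrowright\mathcal T)$ as the disjoint union of its components. Each $X$-component meets finitely many $N$-components, at most $|M^{\gp}/N^{\gp}|$ of them, and they are permuted by the generators $s$. Asymptotic dimension is then compared as follows. A cover of $\mathcal T$ at scale $pr'$ for $d_N$, with $r'$ chosen large, is pulled back via $x\mapsto cx$ to a cover of $X$. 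Within each $N$-component we have bi-Lipschitz control: $d_N(y,y')\le C\,d_X(y,y')$ whenever $y,y'$ lie in the same $N$-component. This holds because the connecting $X$-path, pushed by $c$ and by a coset representative, yields $N$-words.

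To handle several $N$-components inside one $X$-component, we refine the pulled-back cover by coset labels. Since points of different cosets at bounded $X$-distance keep the same dimension count via products, we use translation by the finitely many representatives of $M^{\gp}/N^{\gp}$, each acting $K^{O(1)}$-to-one. The reverse direction, $\asdim(N\curvearrowright\mathcal T)\le\asdim(M\curvearrowright X)$, follows from $d_X\le p\,d_N$ together with the subspace monotonicity of asymptotic dimension. All constants depend only on $p$, $\ell_S(c)$, and the coset representatives, that is, on $M,S,p$.
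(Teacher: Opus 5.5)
Your Step~1 is correct and agrees with the paper, and so are the two easy facts in Step~2. That covers the choice of $c$ from Lemma~\ref{lemma:cancellationAlgebra}, the factorization through $\iota(M)$ on $\mathcal T=cX$, the fiber bound $K^p$, and the algebra via Lemma~\ref{lemma:powersAlgebra}.

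The gap is the comparison of $d_X$ with $d_N$, which is the real content of the lemma. Your key claim, $d_N(y,y')\le C\,d_X(y,y')$ for $y,y'$ in the same $N$-component, is false. Let $M=\N\times\Z/2$ be generated by $s=(1,0)$ and $t=(0,1)$, so $p$ must be even; take $p=2$. Let it act on $X=\Z/(2m+1)$ by $s\colon x\mapsto x+1$ and $t=\mathrm{id}$. Then $K=1$, $c=1$, $\mathcal T=X$, and $N$ acts through $x\mapsto x+2$. So $X$ is a single $N$-component, yet $d_X(0,1)=1$ while $d_N(0,1)=m$.

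The same example shows that your coset labels are ill-defined. The relations $s\cdot0=1$ and $s^{2m}\cdot1=0$ assign the pair $(0,1)$ different classes in $M^{\gp}/N^{\gp}$.

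Even for a free action, where labels make sense, independently chosen covers of the $N$-components do not glue. Take $\N$ acting on $\Z$ by translation with $p=2$ (allowed, since the torsion is trivial), fix any $d_N$-scale $\rho$, and put $L=2\rho+2$. Colour the even points of $[jL,(j+1)L)$ by $j\bmod 2$ and the odd points by $(j+1)\bmod 2$. Both colours have $\rho$-components of $d_N$-diameter $\rho$, but each colour is a single unbounded $3$-component for $d_X$. The covers on different cosets must be chosen compatibly. In the free case this can be done by translating along coset representatives, as you gesture at, but coincidental relations in a non-free action prevent doing this pointwise.

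So the direction $\asdim(M\curvearrowright X)\le\asdim(N\curvearrowright\mathcal T)$, which Theorem~\ref{thm:rank} uses, is unproved. The reverse direction is not justified as written either. Monotonicity needs a coarse embedding, and the identity $(\mathcal T,d_N)\to(\mathcal T,d_X)$ is not one. Bounded geometry rescues it: a $d_N$-$r$-component lies in a $d_X$-ball of radius $R(pr)$, so it has boundedly many points and hence bounded $d_N$-diameter. However, the resulting control depends on $K$, which the statement excludes.

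The missing idea is an auxiliary space that records the phase modulo $p$ without ever choosing it. In Lemma~\ref{lemma:powers} the paper maps $x\mapsto\Phi(x)=(s_1^{a_1}\cdots s_k^{a_k}x)_{a\in\{0,\ldots,p-1\}^k}$ into $X^I$, with the supremum of the $d_p$-distances. It then symmetrizes under the coordinate-shift action of $F=(\Z/p\Z)^k$ to get $\mathcal A=\bigcup_b\tau_b\Phi(X)$, and passes to $\mathcal A/F$. Since $\Phi(s_jx)$ and $\tau_{e_j}\Phi(x)$ differ by at most one $s_j^p$-step in each coordinate, one gets $d_q(\pi\Phi(x),\pi\Phi(y))\le d(x,y)\le p\,d_q(\pi\Phi(x),\pi\Phi(y))+k(p-1)$. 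This is a genuine coarse equivalence. Then $\asdim(\mathcal A/F)=\asdim(\mathcal A)=\asdim(X,d_p)$ by the finite-quotient theorem (Lemma~\ref{lemma:finiteQuotient}) and the finite union theorem. Your observation that each $X$-component meets at most $|M^{\gp}/N^{\gp}|$ many $N$-components is true, though unproved in your text, but it does not replace this step.
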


We prove this lemma in Section \ref{sec:reduction}.

\subsection{The free-period decomposition and the induction}

Assume now that $M$ is cancellative, that $\Gamma=M^{\gp}$ is torsion-free, and that $n=\rk(M)$. Fix a finite generating set $S$ and a common fiber bound $K$ for its generators. The induction starts at $n=0$: the reduced monoid is then the identity monoid, and every graph component is a singleton. For $n\geq1$, we assume the theorem, with uniform controls, for every monoid of smaller rank.

For $a,b\in M$, recall that
\[
 \period_{a,b}=\{x\in X:ax=bx\}.
\]
 For an integer $C\geq1$, let
\[
 \period_C=\bigcup_{\substack{a,b\in M,\ a\ne b\\
                  \ell_S(a)+\ell_S(b)\leq C}}\period_{a,b},
 \qquad \free_C=X\setminus\period_C.
\]
Thus $\period_C$ is the $C$-period part and $\free_C$ is the $C$-free part. Only finitely many pairs occur in this union. At a point of $\free_C$, evaluation distinguishes any two monoid elements whose total positive word length is at most $C$.

On a relation set $\period_{a,b}$, the action factors through $M/\langle a=b\rangle$. Lemma~\ref{lemma:rankDrop} lowers its rank from $n$ to $n-1$. Applying the induction hypothesis to these finitely many quotients gives the period estimate. 

\begin{lemma}\label{lemma:period}
With the notation above, assume that Theorem~\ref{thm:rank}, including its assertion about controls, holds for all finitely generated commutative monoids of rank $<n$. Then for every action $M\curvearrowright X$ whose generators in $S$ have fiber cardinalities at most $K$, and every integer $C\geq1$, the set $\period_C$ has asymptotic dimension at most $(n-1)$, with control only depending on $M,S,K,C$.
\end{lemma}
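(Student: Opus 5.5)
The plan is to write $\period_C$ as a finite union of the invariant sets $\period_{a,b}$, with $a\ne b$ and $\ell_S(a)+\ell_S(b)\leq C$. I would apply the induction hypothesis to each piece and then use a uniform finite union theorem. The number of pieces is at most $|B_S(C)|^2$, where $B_S(C)$ is the set of elements of word length at most $C$. This depends only on $M,S,C$.

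\emph{Step 1: each piece is a lower-rank action with the same fiber bound.} By Lemma~\ref{lemma:relation}, $\period_{a,b}$ is invariant and its action factors through $M_{a,b}:=M/\langle a=b\rangle$. The generating set is the image $\bar S$ of $S$, and the generating maps are restrictions of the original ones. A restricted map has fibers no larger than the original, so the fiber bound is still $K$. Since $M$ is cancellative with torsion-free completion of rank $n$ and $a\ne b$, Lemma~\ref{lemma:rankDrop} gives $\rk(M_{a,b})=n-1$. The Schreier graph of $M_{a,b}\curvearrowright\period_{a,b}$ with respect to $\bar S$ is the subgraph of $G_S^X$ induced on $\period_{a,b}$, because invariance keeps every edge $\{x,sx\}$ with $x\in\period_{a,b}$ inside $\period_{a,b}$. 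The induction hypothesis therefore gives $\asdim\le n-1$ for the intrinsic path metric $d_{a,b}$, with control depending only on $M_{a,b},\bar S,K$. Hence the control depends only on $M,S,K,a,b$, and there are finitely many such pairs for fixed $C$.

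\emph{Step 2: invariant subsets are isometrically embedded.} Clearly $d\le d_{a,b}$ on $\period_{a,b}$. For the reverse inequality, take a path $x=z_0,\dots,z_r=y$ in $G_S^X$. Each step is either forward, $z_{i+1}=sz_i$, or backward, $z_i=tz_{i+1}$. Let $g$ be the product of the forward labels and $h$ the product of the backward labels. Induction on the length of the path, using commutativity, gives $gx=hy$ with $\ell_S(g)+\ell_S(h)\le r$. The base cases are immediate. For the inductive step, if $F_1x=B_1z$ and $F_2z=B_2y$, then
\[
F_1F_2x=F_2B_1z=B_1F_2z=B_1B_2y .
\]
If $x,y\in\period_{a,b}$, the forward orbits $x, s x,\dots,gx$ and $y,\dots,hy$ stay in $\period_{a,b}$ by invariance. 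So $d_{a,b}(x,y)\le\ell_S(g)+\ell_S(h)\le r$. Thus $d_{a,b}=d$ on $\period_{a,b}$. The same argument applies to the invariant set $\period_C$, so its subspace metric is again the graph metric. The covers from Step 1 therefore witness $\asdim\le n-1$ for each $\period_{a,b}$ inside $(\period_C,d)$, with the same controls.

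\emph{Step 3: finite union with uniform control.} This is the step needing the most care. I would invoke the standard finite union theorem (Bell--Dranishnikov) in a quantitative form. If $A,B$ are subsets of a metric space, each of asymptotic dimension at most $m$ with controls $R_A,R_B$, then $A\cup B$ has asymptotic dimension at most $m$ with control depending only on $m,R_A,R_B$.

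The argument I have in mind runs as follows. At scale $r$, cover $A$ at scale $r$ by sets $U_0,\dots,U_m$, and cover $B$ at a larger scale $r'=2R_A(r)+r$ by sets $V_0,\dots,V_m$. Then glue, setting
\[
W_i=V_i\cup\{u\in U_i: d(u,V_i)>R_A(r)+r\}
\]
or a variant of this. One then checks that each $r$-component of $W_i$ has diameter bounded by an explicit function of $R_A(r)$ and $R_B(r')$. Distances may be infinite here, but components lie within graph components, so the argument is unaffected.

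Iterating over the $\le|B_S(C)|^2$ pieces gives $\asdim(\period_C)\le n-1$ with control depending only on $M,S,K,C$, which is Lemma~\ref{lemma:period}.
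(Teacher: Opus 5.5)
Your proposal is correct and follows the paper's proof. You write $\period_C$ as the finite union of the invariant sets $\period_{a,b}$, apply the induction hypothesis to $M/\langle a=b\rangle$ via Lemma~\ref{lemma:rankDrop}, identify intrinsic and ambient metrics (your Step~2 is Corollary~\ref{cor:invariant}), and finish with the quantitative finite union theorem. One slip in your Step~3 sketch: the displayed $W_i$ discards points of $U_i\setminus B$ within $R_A(r)+r$ of $V_i$, and when the $U_j$ partition $A$ those points lie in no other $W_j$, so the family need not cover $A\cup B$. The plain union $W_i=U_i\cup V_i$, with the $V_i$ chosen at scale $R_A(r)+2r$, already works; this is exactly Lemma~\ref{lemma:asdimDecompos}, and iterating it gives the paper's explicit control.
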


Its proof will be given in Section~\ref{sec:period}.

\medskip

The number $C$ above is produced from the free part after fixing $r$.

\begin{lemma}\label{lemma:free}
Let $M$ be cancellative with torsion-free group completion of rank $n$, let $S$ be a finite generating set, and let $K\geq1$ be an integer. For every integer $r\geq1$, there exists $C=C(M,S,K,r)\geq1$ such that, for every action $M\curvearrowright X$ whose generators in $S$ have fiber cardinalities at most $K$, the set $\free_C$, with respect to the metric from $X$, has asymptotic dimension at most $n$ at scale $r$, with control $R_\free$ only depending on $M,S,K,r$.
\end{lemma}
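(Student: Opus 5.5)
The plan is to show that, at a fixed scale $r$, the $C$-free part locally looks like a large ball in the Cayley graph of $\Gamma\cong\Z^n$. We then transplant onto $\free_C$ a coloring rule for $\Z^n$ that has finite radius of dependence. That rule must use $n+1$ colors and give bounded $r$-components, and its radius must depend only on $n$ and $r$; it is the ``sharp dynamic asymptotic-dimension partition'' for free $\Z^n$-actions. The integer $C$ will be chosen large enough that every ball of the relevant radius in $\free_C$ is faithfully coordinatized by $\Gamma$. Once coordinatized, the rule can be evaluated as if it ran on a free $\Gamma$-action.

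\emph{Step 1 (forward coordinates).} Fix a radius $t$, to be determined by $r$. By Lemma~\ref{lemma:denominator}, choose $q\in M$ with $qB_\Gamma(t)\subseteq M$. Let $x$ and $y$ be joined by a path of length at most $t$ whose steps spell $w\in B_\Gamma(t)$, forward steps counting as $s$ and backward steps as $s^{-1}$. Applying $q$ and using commutativity step by step gives
\[
q y=(qw)\,x,\qquad qw\in M .
\]
Now take $C\geq 2\max\{\ell_S(qw):w\in B_\Gamma(2t)\}$, and suppose $x\in\free_C$. Then distinct elements $qw\ne qw'$ satisfy $(qw)x\ne(qw')x$. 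Consequently $y\mapsto w$ is well defined on $\{q y\}$. Its failure to be injective on $y$ is controlled by the fibers of $q$, which have size at most $K^{\ell_S(q)}$. Thus the $t$-ball of $x$ maps, with fibers of size at most $K^{\ell_S(q)}$, into a copy of $B_\Gamma(t)$. Graph adjacency goes to Cayley adjacency, and path words are well defined. Comparing two centers $x,x'$ at distance at most $t$, the coordinates they assign to common points differ by the translation by the word of a connecting path. This is the cocycle identity needed for gluing.

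\emph{Step 2 (transplant a local rule).} Fix a rule $\Phi$ that assigns to each point of a free $\Z^n$-action one of $n+1$ colors. We require that each point's color depends only on the labelled ball of radius $t_0=t_0(n,r)$ around it, and that every $r'$-component of each color has diameter at most $D(n,r')$. Here $r'$ is $r$ enlarged to absorb the fiber-collapse distortion from Step~1. Such rules come from the standard ``toast''/brick constructions for $\asdim_{\mathsf B}$ of free $\Z^n$-actions. Their only non-local ingredient is a maximal separated set of markers, and we choose one arbitrarily in $\free_C$ by Zorn's lemma. Each $y\in\free_C$ receives color $\Phi$ evaluated on its $\Gamma$-coordinatized $t_0$-ball. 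By the cocycle identity, two overlapping coordinatizations yield the same color on their overlap, so this assignment is color-preserving under gluing. An $r$-chain in one color class of $\free_C$ then lifts, via the coordinates, to an $r'$-chain in one color class of $\Phi$. Points in the same fiber of $q$ are at distance at most $2\ell_S(q)$ and can be absorbed into $r'$. This gives the diameter bound $R_\free\le D(n,r')+2\ell_S(q)$, which depends only on $M,S,K,r$. Chains that leave $\free_C$ are irrelevant, because the metric is the one inherited from $X$ and chains are required to stay inside the color set.

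\emph{Main obstacle.} The delicate step is Step~2. It requires a rule $\Phi$ that achieves the sharp count of $n+1$ colors rather than $2^n$ or $3^n$, has a genuinely bounded radius of dependence, and tolerates the $K^{\ell_S(q)}$-to-one collapse from backward branching. I would first try the brick-shifting construction on $\Z^n$: use $n+1$ layers of cubes, with layer $i$ shifted along the diagonal, whose boundaries are thickened by $r'$. Anchoring the bricks to markers makes them locally defined. The remaining point to check is that the brick boundaries from neighboring markers can be aligned so that no point lies on the thickened boundaries of all $n+1$ layers. I would enforce this by letting the markers lie on a sufficiently sparse sublattice of coordinates, so that the pattern is consistent across overlapping charts.
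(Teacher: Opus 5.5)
Your Step~1 is essentially the paper's forward-coordinate device: a common denominator $q$ with $qB_\Gamma(2w)\subseteq M$, a cutoff $C$ making the forward images $(qu)x$ distinct, and the transport identity. Your first paragraph also names the right object. The gap is Step~2. The $(n+1)$-color rule with finite radius of dependence is assumed rather than produced, and it is the whole difficulty of the lemma. Attributing it to ``standard toast/brick constructions'' does not supply it, and the construction you sketch fails exactly at the step you flag. The shifted-brick argument works on one global copy of $\Z^n$; the problem is alignment across charts. Making markers lie on a common coset of a sparse sublattice $H\le\Gamma$ consistently across charts amounts to a function $o\colon\free_C\to\Gamma/H$ with $o(y)=o(x)+w$ whenever $y$ has coordinate $w$ in the chart of $x$, and holonomy obstructs this. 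Take $\N^n$ acting by translation on $(\Z/N\Z)^n$. It is cancellative with $K=1$, and every point lies in $\free_C$ once $N>C$. Going once around the first circle forces $Ne_1\in H$, which fails for $H=b\Z^n$, $b\ge2$, and $N\equiv1\pmod b$. Uniformity of the control forbids adapting $H$ to $N$. Arbitrary choices by Zorn's lemma do not help either, since the obstruction is the holonomy of the charts, not measurability or locality.

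The paper does not build the rule by hand. It imports the sharp theorem $\operatorname{dad}(\Z^n\curvearrowright Y)\le n$ for free actions on compact zero-dimensional spaces (Conley--Jackson--Marks--Seward--Tucker-Drob, Pilgrim). It applies this to the universal space $\univ$ of configurations that properly color \emph{every} power of the Cayley graph. Your plan also lacks this transfer mechanism. A marker subshift is not free, because periodic maximal separated sets exist, so the theorem would not apply to it. Using all layers $m\ge1$ forces freeness, while the resulting clopen partition reads only the finitely many coordinates $P_L$. The labels are the colorings $\lambda_m$, defined on all of $X$ and read at the pairwise distinct forward images $(qu)x$. By Lemma~\ref{lemma:universal}, every such local picture extends to a point of $\univ$. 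This removes your concerns about the $K^{\ell_S(q)}$-to-one collapse and about markers near the edge of $\free_C$. Lifting a long $r$-chain then needs the first-exit argument, because charts have radius only $w=L+D+r$. Finally, the diameter bound must use $\max_{s\in B_\Gamma(D)}\ell_S(qs)$ rather than $D+2\ell_S(q)$, since the monoid length of $qs$ is not controlled by $|s|_\Gamma$.
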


Its proof will be given in Section~\ref{sec:free}.

\medskip

These two estimates have different roles. The free estimate is obtained at the prescribed scale $r$, with a resulting cutoff $C$ and bound $R_\free$. Once $C$ is fixed, the period estimate is available at every scale. We therefore use it at the larger scale
\[
 r'=R_\free+2r.
\]
This choice accounts for an $r$-chain in one color which leaves the period part, travels through one $r$-component of that color in the free part, and returns. The distance between its two visits to the period part is at most $r'$. The following elementary gluing statement makes this observation precise.

\begin{lemma}\label{lemma:asdimDecompos}
Suppose $X=A\cup B$. Let $U_0,\ldots,U_n$ cover $A$, with every $r$-component of each $U_i$ of diameter at most $R_\free$. Let $W_0,\ldots,W_n$ cover $B$, with every $(R_\free+2r)$-component of each $W_i$ of diameter at most $R_\period$. Then $V_i=U_i\cup W_i$ cover $X$, and every $r$-component of each $V_i$ has diameter at most
\[
 R_\period+2R_\free+2r.
\]
\end{lemma}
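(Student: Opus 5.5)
Covering is immediate, since $U_i$ covers $A$ and $W_i$ covers $B$. For the diameter bound, fix $i$ and an $r$-component $Q$ of $V_i$. The plan is to split $Q$ into its points in $W_i$ and its points in $U_i\setminus W_i$, and to bound distances between points of $Q$ in three stages. To keep the sets disjoint, we may first replace $U_i$ by $U_i\setminus B$ if we like; but it is cleaner to argue directly with chains and allow points to belong to both.

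First we take an $r$-chain $x_0,x_1,\dots,x_m$ in $V_i$ and look at the indices $j$ with $x_j\in W_i$. Between two consecutive such indices $j<j'$, the intermediate points $x_{j+1},\dots,x_{j'-1}$ all lie in $U_i$. They are successive at distance $\le r$, so they form an $r$-chain in $U_i$ and lie in a single $r$-component of $U_i$, whose diameter is at most $R_\free$. Hence
\[
d(x_j,x_{j'})\le r+R_\free+r=R_\free+2r,
\]
using the triangle inequality through $x_{j+1}$ and $x_{j'-1}$. If $j'=j+1$, the bound is simply $r$. So the $W_i$-points of the chain form an $(R_\free+2r)$-chain in $W_i$. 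Consequently all $W_i$-points of $Q$ lie in a single $(R_\free+2r)$-component of $W_i$, provided we check that any two $W_i$-points of $Q$ are joined by an $r$-chain in $V_i$, which holds by the definition of $Q$. These points therefore have mutual distance at most $R_\period$.

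Next we bound a general pair $x,y\in Q$. If $Q$ contains no point of $W_i$, then $Q$ is an $r$-connected subset of $U_i$ and has diameter at most $R_\free$. Otherwise, we take an $r$-chain from $x$ to $y$ in $V_i$. If it avoids $W_i$, then $d(x,y)\le R_\free$. If not, let $w$ be its first $W_i$-point and $w'$ its last. The initial segment before $w$ lies in one $r$-component of $U_i$, so $d(x,w)\le R_\free+r$, and similarly $d(w',y)\le R_\free+r$. Together with $d(w,w')\le R_\period$, this gives
\[
d(x,y)\le R_\period+2R_\free+2r.
\]
When $x$ itself lies in $W_i$, the term $d(x,w)$ is zero, so the bound only improves.

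The main point requiring care is the first step. There we must confirm that consecutive $W_i$-visits are within $R_\free+2r$ even when the excursion is empty or consists of a single point, and that points in $U_i\cap W_i$ cause no trouble. Such points can simply be treated as $W_i$-points, and both edge cases are covered by the displayed bound.
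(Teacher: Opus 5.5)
Your proposal is correct and follows essentially the same argument as the paper. Along an $r$-chain in $V_i$, successive $W_i$-visits are within $R_\free+2r$, so the $W_i$-points of the component lie in one $(R_\free+2r)$-component of $W_i$, and every point lies within $R_\free+r$ of a $W_i$-visit. Your first/last-visit bookkeeping for a pair $x,y$ is an explicit version of the paper's final triangle-inequality step.
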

\begin{proof}
Let $Z$ be an $r$-component of $V_i$. If $Z\cap W_i=\emptyset$, then $Z$ lies in one $r$-component of $U_i$, and the conclusion follows.

Otherwise, consider any $r$-chain in $Z$ between two points of $W_i$. Between two successive visits to $W_i$, the chain either has one step, or passes through a single $r$-component of $U_i$. The distance between those two visits is therefore at most $R_\free+2r$. It follows that $Z\cap W_i$ lies in one $(R_\free+2r)$-component of $W_i$, so its diameter is at most $R_\period$. Any point of $Z$ is within $R_\free+r$ of $Z\cap W_i$, by considering its first visit to $W_i$. The triangle inequality proves the bound.
\end{proof}

The idea of the proof of Lemma \ref{lemma:asdimDecompos} will also be repeatedly used in the rest of the paper. With all the ingredients in this section, we can now prove Theorem \ref{thm:rank}.

\begin{proof}[Proof of Theorem~\ref{thm:rank}]
By Lemma \ref{lemma:reduction}, it suffices to consider cancellative monoids with torsion-free group completion of the same rank. We prove by induction on the rank $n$. The assertion for $n=0$ is trivial.

Suppose $n\geq1$, and assume the theorem, including the assertion about controls. Fix an integer $r\geq1$. Lemma~\ref{lemma:free} gives $C$ and a partition $U_0,\ldots,U_n$ of $\free_C$ whose $r$-components are $R_\free$-bounded. Apply Lemma~\ref{lemma:period} at scale
\[
 r'=R_\free+2r.
\]
It gives a partition $W_0,\ldots,W_{n-1}$ of $\period_C$ whose $r'$-components are $R_\period$-bounded. Put $W_n=\emptyset$ and define
\[
 V_i=U_i\cup W_i\qquad(0\leq i\leq n).
\]
Lemma~\ref{lemma:asdimDecompos} shows that their $r$-components have diameter at most
\[
 R_\period+2R_\free+2r.
\]
The controls in the period and free propositions depend only on $M,S,K$ and the indicated scales. Thus this bound is uniform over all the actions under consideration. The reductions retain this uniformity for the original monoid, which completes the induction.

Finally, the translation action $\N^n\curvearrowright\Z^n$ has graph the standard lattice, whose asymptotic dimension is $n$ \cite{Roe}. Thus the dimension bound is sharp.
\end{proof}
\section{Reduction}\label{sec:reduction}

In this section, we prove the reduction Lemma \ref{lemma:reduction}, which reduces an arbitrary monoid action to a cancellative monoid with torsion free group completion.

We first give an equivalent characterization of the graph distance.
\begin{lemma}\label{lemma:uniqueSink}
For every $x,y\in X$,
\[
 d(x,y)=\min\{\ell_S(a)+\ell_S(b):a,b\in M,\ ax=by\},
\]
where the minimum of the empty set is $\infty$.
\end{lemma}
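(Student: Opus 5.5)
We prove the two inequalities separately, writing $\delta(x,y)$ for the right-hand side $\min\{\ell_S(a)+\ell_S(b):ax=by\}$.

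\emph{Upper bound $d\le\delta$.} Suppose $ax=by$ with $a=s_1\cdots s_j$ and $b=t_1\cdots t_m$, where $j=\ell_S(a)$ and $m=\ell_S(b)$. Consider the sequence
\[
 x,\ s_jx,\ s_{j-1}s_jx,\ \ldots,\ ax=by,\ \ldots,\ t_{m-1}t_my,\ t_my,\ y.
\]
Consecutive terms are either equal, which happens when a loop was discarded, or joined by an edge of the form $\{z,sz\}$. Hence this is a walk in the Schreier graph of length at most $j+m$, and $d(x,y)\le\ell_S(a)+\ell_S(b)$. If no such pair $(a,b)$ exists, the inequality is trivial.

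\emph{Lower bound $\delta\le d$.} We induct on $L=d(x,y)<\infty$ and show that there exist $a,b$ with $ax=by$ and $\ell_S(a)+\ell_S(b)\le L$. For $L=0$, take $a=b=1$; this is allowed since the empty product is $1$. For $L\ge1$, take a geodesic and let $z$ be its penultimate vertex, so $d(x,z)=L-1$ and $z,y$ are adjacent. By induction there are $a',b'$ with $a'x=b'z$ and $\ell_S(a')+\ell_S(b')\le L-1$. The edge between $z$ and $y$ has one of two forms.
\begin{itemize}
\item If $y=sz$, then $sa'x=sb'z=b'y$ by commutativity. Put $a=sa'$ and $b=b'$.
\item If $z=sy$, then $a'x=b'sy$. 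Put $a=a'$ and $b=b's$.
\end{itemize}
In both cases the total length grows by at most one, so $\ell_S(a)+\ell_S(b)\le L$. The minimum is attained because word lengths are nonnegative integers.

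\emph{Role of commutativity.} The key point is the first case, where a generator must be pushed onto the $x$-side; commutativity ($sb'=b's$) is exactly what makes this possible. This step is the only real obstacle. Without commutativity, a zigzag path such as $x\to sx\leftarrow y'$ followed by $y'\to ty'$ could not in general be collapsed into a single relation $ax=by$.

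Finally, if $x$ and $y$ lie in different components, then both sides equal $\infty$: the upper-bound argument shows that any relation $ax=by$ would produce a path, so no such relation exists.
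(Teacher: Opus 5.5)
Your proof is correct and follows essentially the same route as the paper. The upper bound comes from concatenating the forward path $x\to ax$ with the backward path $by\to y$, and the lower bound from induction on path length, using commutativity to move forward steps onto the $x$-side; the paper merges two arbitrary successive relations $ax=bz$, $cz=ey$ into $(ca)x=(be)y$ rather than adding one edge at a time, which is the same idea.
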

\begin{proof}
An equality $ax=by$ gives a path consisting of a forward path from $x$ to $ax$ and a backward path from $by$ to $y$. Its length is at most $\ell_S(a)+\ell_S(b)$.

Conversely, each edge has such a description with total word length at most one. Suppose $ax=bz$ and $cz=ey$ describe two successive portions of a path. Commutativity gives
\[
 (ca)x=c(ax)=c(bz)=b(cz)=(be)y,
\]
and the new total word length is at most the sum of the two previous lengths. Induction on path length proves the converse inequality. This is the common-sink form of a path: all forward steps can be placed before all backward steps.
\end{proof}

\begin{corollary}\label{cor:invariant}
For an invariant subset $Y\subseteq X$, its own graph metric agrees with the ambient metric. For $c\in M$, the set $cX$ is invariant and
\[
 d(x,cx)\leq\ell_S(c)\qquad(x\in X).
\]
\end{corollary}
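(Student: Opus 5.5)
Corollary~\ref{cor:invariant} has two assertions, and I will deduce both directly from Lemma~\ref{lemma:uniqueSink}.

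For the first assertion, let $Y\subseteq X$ be invariant and let $d_Y$ be the path metric of the Schreier graph of $M\curvearrowright Y$, with edges $\{y,sy\}$ for $y\in Y$, $s\in S$. Since $Y$ is invariant, the restricted maps send $Y$ into itself, so $M\curvearrowright Y$ is an action of the same monoid with the same generating set. I will apply Lemma~\ref{lemma:uniqueSink} twice: once to the action on $Y$ and once to the action on $X$. For $x,y\in Y$ this gives
\[
 d_Y(x,y)=\min\{\ell_S(a)+\ell_S(b):ax=by\}=d(x,y).
\]
The two minima range over the same set of pairs $(a,b)\in M\times M$. Indeed, the condition $ax=by$ refers only to the values of the maps at $x$ and $y$, and these agree for the restricted and unrestricted actions. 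The word length $\ell_S$ is intrinsic to $M$ and $S$. Hence the two metrics coincide, including the value $\infty$.

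For the second assertion, invariance of $cX$ follows from commutativity: $a(cx)=c(ax)\in cX$ for every $a\in M$ and $x\in X$. For the distance bound, I apply Lemma~\ref{lemma:uniqueSink} to the pair $(x,cx)$ with the choice $a=c$, $b=1$. Then $ax=cx=1\cdot(cx)$, so
\[
 d(x,cx)\leq\ell_S(c)+\ell_S(1)=\ell_S(c),
\]
because $\ell_S(1)=0$ (the empty product).

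Alternatively, one can argue directly: writing $c=s_1\cdots s_j$ with $j=\ell_S(c)$ gives a path $x,\ s_jx,\ s_{j-1}s_jx,\dots,cx$ of at most $j$ edges, with loops simply skipped.

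There is no real obstacle. The only point requiring care is the first assertion, namely that paths in $X$ between points of $Y$ might leave $Y$. The common-sink form in Lemma~\ref{lemma:uniqueSink} removes this issue: it replaces any such path by a forward path from $x$ and a forward path from $y$. Both of these stay in $Y$ by invariance, and they have no greater total length.
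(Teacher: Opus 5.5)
Your proof is correct and follows essentially the paper's argument. The paper also gets the metric agreement from the common-sink form in Lemma~\ref{lemma:uniqueSink}, noting that the forward paths from points of $Y$ stay in $Y$, which is what your comparison of the two minima encodes; it likewise gets invariance of $cX$ from commutativity and the bound $d(x,cx)\le\ell_S(c)$ from a word representing $c$, as in your direct alternative.
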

\begin{proof}
Both forward paths in Lemma~\ref{lemma:uniqueSink} stay in $Y$ when their initial points lie in $Y$. This proves equality of the metrics. Commutativity gives $a(cX)\subseteq cX$, and a word representing $c$ gives the displayed bound.
\end{proof}

Every acting map is $1$-Lipschitz for $d$, since it sends an edge to an edge or a point. If $S'$ is another finite generating set, replacing every $S'$-edge by an $S$-word, and conversely, compares the two graph metrics by constant factors. Thus $\asdim(M\curvearrowright X)$ is independent of the finite generating set.

Repeated use of Lemma~\ref{lemma:asdimDecompos} gives the finite union theorem \cite{BellDranishnikov}: a finite union of subspaces of asymptotic dimension at most $n$ has the same bound, with control depending only on their controls and their number.

\medskip

To prove Lemma \ref{lemma:reduction}, we first pass to a forward image at uniformly bounded distance from $X$, on which the action factors through the cancellative monoid. We then compare the original generators with their powers.

\begin{lemma}\label{lemma:cancellationMetric}
Let $M$ be a finitely generated commutative monoid and let $N=\iota(M)\subseteq M^{\gp}$. There is $c\in M$, depending only on $M$, such that for every action $M\curvearrowright X$ the action on $\mathcal T=cX$ factors through $N$. With the image of $S$ as generating set,
\[
 \asdim(M\curvearrowright X)=\asdim(N\curvearrowright\mathcal T).
\]
If $R$ is a control for the latter action, a control for the former is
\[
 r\longmapsto R(r+2\ell_S(c))+2\ell_S(c).
\]
The generator fiber bound does not increase.
\end{lemma}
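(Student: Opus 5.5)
The plan is to take $c\in M$ from Lemma~\ref{lemma:cancellationAlgebra}, so that $\iota(a)=\iota(b)$ implies $ac=bc$. This $c$ depends only on $M$. Put $\mathcal T=cX$.

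\textbf{Factoring through $N$.} By Corollary~\ref{cor:invariant}, $\mathcal T$ is invariant. If $\iota(a)=\iota(b)$ and $y=cx\in\mathcal T$, then
\[
 ay=(ac)x=(bc)x=by .
\]
Hence the action on $\mathcal T$ factors through $N=\iota(M)$, with $\iota(a)y:=ay$ well defined.

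The generating maps of $N$ on $\mathcal T$ are restrictions of the original generating maps. Therefore the Schreier graph of $N\curvearrowright\mathcal T$ with generating set $\iota(S)$ is the induced subgraph of $G_S^X$ on $\mathcal T$, up to loops. Fibers of restrictions are subsets of the original fibers, so the generator fiber bound does not increase. By Corollary~\ref{cor:invariant}, the graph metric of this subgraph agrees with the restriction of $d$. Thus $\mathcal T$ is an isometrically embedded subspace of $X$.

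\textbf{Coarse density.} Again by Corollary~\ref{cor:invariant}, $d(x,cx)\le\ell_S(c)=:L$ for every $x$. So $\mathcal T$ is $L$-dense in $X$.

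\textbf{The inequality $\le$ for subspaces.} Asymptotic dimension does not increase when passing to a subspace, and the same control works. Hence $\asdim(N\curvearrowright\mathcal T)\le\asdim(M\curvearrowright X)$.

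\textbf{The converse, with the stated control.} Given $r$, take a cover $U_0,\ldots,U_n$ of $\mathcal T$ at scale $r+2L$ with control $R(r+2L)$. Define
\[
 V_i=\{x\in X: cx\in U_i\}.
\]
These sets cover $X$. If $x,x'\in V_i$ satisfy $d(x,x')\le r$, then
\[
 d(cx,cx')\le d(cx,x)+d(x,x')+d(x',cx')\le r+2L .
\]
One could also just use that $c$ is $1$-Lipschitz, which gives the sharper bound $d(cx,cx')\le r$; either suffices. So an $r$-chain in $V_i$ maps under $x\mapsto cx$ to an $(r+2L)$-chain in $U_i$. The image of an $r$-component of $V_i$ therefore lies in one $(r+2L)$-component of $U_i$, which has diameter at most $R(r+2L)$. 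Moving each point back costs at most $L$, so the component has diameter at most
\[
 R(r+2L)+2L ,
\]
which is the claimed control.

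\textbf{Main obstacle.} The only delicate point is the metric identification: the intrinsic graph metric on $\mathcal T$ must agree with the restriction of $d$, not merely be bounded by it. Corollary~\ref{cor:invariant} handles this via the common-sink form of Lemma~\ref{lemma:uniqueSink}, since both forward paths start in $\mathcal T$ and stay there.

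A second small check is that the $N$-graph uses the generating set $\iota(S)$. Distinct elements $s,s'\in S$ with the same image give identical edges on $\mathcal T$, so the graph, and hence the metric, is unchanged.
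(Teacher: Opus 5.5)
Your proof is correct and follows the paper's argument. You take $c$ from Lemma~\ref{lemma:cancellationAlgebra}, identify $\mathcal T=cX$ isometrically with a subspace via Corollary~\ref{cor:invariant}, restrict covers for one inequality, and pull back covers along the bounded-displacement map $x\mapsto cx$ for the other, spelling out the ``bounded-displacement comparison'' that the paper leaves implicit.
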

\begin{proof}
Choose $c$ from Lemma~\ref{lemma:cancellationAlgebra}. If $\iota(a)=\iota(b)$, then $ac=bc$, and hence $ay=by$ for every $y\in\mathcal T=cX$. Thus the action on this invariant set factors through $N$. The quotient generating maps are the restrictions of the original generating maps, so their fiber bounds do not increase. By Corollary~\ref{cor:invariant}, the metric on $\mathcal T$ is the ambient metric and $d(x,cx)\leq\ell_S(c)$. The bounded-displacement comparison gives the asserted control. Restricting covers of $X$ to $\mathcal T$ gives the reverse dimension inequality.
\end{proof}

A metric space is \emphd{proper} if its closed bounded balls are compact. The graph components below are proper whenever all finite-radius balls are finite.

\begin{lemma}\label{lemma:finiteQuotient}
Let $Y$ be an extended metric space whose finite-distance components are proper, and let a finite group $F$ act by isometries. Give the orbit space $Y/F$ the metric
\[
 d_q([x],[y])=\min_{h\in F}d_Y(x,hy).
\]
Then
\[
 \asdim(Y)=\asdim(Y/F).
\]
For a fixed dimension bound $n$, controls transfer in both directions with dependence only on $n$, the given control, and $|F|$.
\end{lemma}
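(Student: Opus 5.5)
We prove the two inequalities separately, using the quotient map $\pi\colon Y\to Y/F$, $\pi(x)=[x]$. First we check that $d_q$ is an extended metric. Symmetry and the triangle inequality follow because $F$ acts by isometries: for the triangle inequality, choose $h,h'$ realizing $d_q([x],[y])$ and $d_q([y],[z])$, translate the second pair by $h$, and use $d_Y(x,hh'z)\le d_Y(x,hy)+d_Y(hy,hh'z)$. The minimum over the finite group is attained, so $d_q([x],[y])=0$ forces $x\in Fy$. Moreover $\pi$ is $1$-Lipschitz, and every $[x]$ satisfies $d_q([x],[y])=\min_h d_Y(x,hy)$. Properness of the components is not really needed for the combinatorial argument beyond ensuring the minimum is attained, which already holds since $F$ is finite. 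We will only use the finiteness of $F$.

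\emph{Step 1: $\asdim(Y)\le\asdim(Y/F)$.} Take a cover $U_0,\ldots,U_n$ of $Y/F$ whose $r$-components have diameter at most $R(r)$, and set $V_i=\pi^{-1}(U_i)$. Let $Z$ be an $r$-component of $V_i$. Its image lies in a single $r$-component of $U_i$, because $\pi$ is $1$-Lipschitz, and so any $x,y\in Z$ satisfy $d_Y(x,hy)\le R(r)$ for some $h\in F$. This does not yet bound $d_Y(x,y)$, since $Z$ may contain several translates of a point. We fix this by refining the cover by the group. Enumerate $F=\{h_1,\ldots,h_m\}$. For a fixed $[x]$ in a bounded component $Q$ of $U_i$, choose a base point $x_0$ in the fibre over some point of $Q$. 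Every point of $\pi^{-1}(Q)$ lies within distance $R(r)$ of some $h x_0$. Call two points of $\pi^{-1}(Q)$ \emph{linked} if they are within $r$. A chain in $\pi^{-1}(Q)$ can move from the $R$-neighbourhood of $hx_0$ to that of $h'x_0$ only through pieces lying in both neighbourhoods. Hence each $r$-component of $\pi^{-1}(Q)$ is contained in a union of at most $|F|$ such neighbourhoods, which are chained together with gaps of at most $r$. Its diameter is therefore at most $|F|(2R(r)+r)$. This bound depends only on $R$ and $|F|$, as required.

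\emph{Step 2: $\asdim(Y/F)\le\asdim(Y)$.} This is the main obstacle, since pushing a cover forward along $\pi$ can merge components. Given $r$, take a cover $V_0,\ldots,V_n$ of $Y$ at the scale $r$ with control $R(r)$. We cannot use the images $\pi(V_i)$ directly. Instead we make the cover $F$-invariant before projecting. Set $V_i'=\bigcap_{h\in F}hV_i$? That set need not cover $Y$. The plan is therefore to apply the saturation $\tilde V_i=FV_i$ and bound its $r$-components. An $r$-chain in $\tilde V_i$ consists of points $h_jv_j$ with $v_j\in V_i$. Because $F$ is finite, one can use the standard argument for finite unions: $\tilde V_i=\bigcup_h hV_i$ is a union of $|F|$ sets, each having $r$-components of diameter at most $R(r)$. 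This bounds nothing by itself, since unions increase the dimension.

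Instead we take a cover at a larger scale $r_1$, chosen depending on $|F|$, and invoke the finite union theorem, namely repeated use of Lemma~\ref{lemma:asdimDecompos}, applied to the sets $\pi(V_i)$. Each $\pi(V_i)$ has dimension $0$ at scale $r$ with control $|F|$-dependent. The reason is that if $[x],[y]\in\pi(V_i)$ satisfy $d_q([x],[y])\le r$, then there are $x\in V_i$ and $hy$ with $d_Y(x,hy)\le r$, but $hy$ need not lie in $V_i$. We handle this by replacing $V_i$ with $F$-translates. If this cannot be completed directly, the fallback is to use the characterization of $\asdim\le n$ by uniformly cobounded $r$-multiplicity $\le n+1$ covers. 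Take such a cover $\mathcal U$ of $Y$ at the scale $r$, with Lebesgue number larger than $r$. Then $\{\pi(U)\}$ has Lebesgue number $r$ in $Y/F$, its members have diameter at most the diameter of $U$, and its multiplicity is at most $|F|(n+1)$. Passing from multiplicity back to $n+1$ colors is the delicate point. I expect to resolve it by taking the $F$-saturated cover $\{hU\}$ and the averaging trick: since $F$ acts isometrically, one can choose the covering family $F$-equivariantly, for example via an equivariant partition of unity into a nerve of dimension $n$. Then its image in $Y/F$ has multiplicity $n+1$, with controls depending only on $n$, $R$ and $|F|$.
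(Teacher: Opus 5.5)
Your Step 1 matches the paper's proof of $\asdim(Y)\le\asdim(Y/F)$. You pull back the cover and observe that an $r$-component lies over one $r$-component of the quotient cover. Hence it lies inside the union of the $|F|$ balls of radius $R(r)$ about the translates $hx_0$, and you chain these balls with gaps of at most $r$. Your bound $|F|(2R(r)+r)$ is fine. One sentence is wrong: a chain does not pass between neighbourhoods ``only through pieces lying in both'', since a single step of length $\le r$ can jump between disjoint balls. Your ``gaps of at most $r$'' version is the correct statement.

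The gap is in Step 2, $\asdim(Y/F)\le\asdim(Y)$. This is the substantive direction, and none of your attempts proves it.
\begin{itemize}
\item Pushing $V_i$ forward fails, as you note. For $\Z$ with $y\mapsto -y$ and the alternating cover by intervals of length $R$, the image $\pi(V_0)$ is $\N$ minus the odd multiples of $R$. So it has an unbounded $r$-component for $r\ge2$.
\item Invoking the finite union theorem does not help. If each $\pi(V_i)$ had bounded $r$-components, you would already be done, and they do not.
\item The multiplicity argument is correct, but it only gives $\asdim(Y/F)\le|F|(n+1)-1$.
\item Your proposed fix, an $F$-equivariant cover or partition of unity with $n$-dimensional nerve, is exactly the content of the claim. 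You assert it but do not construct it. Averaging over $F$ gives an equivariant partition of unity subordinate to the saturated cover $\{hU\}$, whose multiplicity is again up to $|F|(n+1)$. An $F$-invariant uniformly bounded cover with Lebesgue number $>r$ and multiplicity $\le n+1$ would push forward to a good cover of $Y/F$, but producing one is the whole difficulty.
\end{itemize}

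The paper does not do this by hand. It splits $Y$ into finite-distance components $Z$ and notes that the quotient component of $Z$ is isometric to $Z/H$, where $H$ is the stabilizer of $Z$. This reduction is what makes a theorem about genuine proper metric spaces applicable when $Y$ has infinite distances. It then applies Kasprowski's theorem on finite-group quotients of proper metric spaces \cite[Corollary~1.2]{Kasprowski}, uniformly because $|H|\le|F|$. Properness is in the hypothesis precisely so that this theorem applies. Your remark that only the finiteness of $F$ is used is therefore unsupported: you have no argument giving the sharp bound without properness.
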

\begin{proof}
First suppose that $Y$ has an $n$-dimensional control. Isometries permute the finite-distance components. If $Z$ is one such component and $H=\{h\in F:hZ=Z\}$, the corresponding quotient component is isometric to $Z/H$. Indeed, it has representatives in $Z$, and for $x,y\in Z$ the distance $d_Y(x,hy)$ is finite exactly when $h\in H$. The components $Z$ have the same control, are proper, and satisfy $|H|\leq|F|$. Now it follows from \cite[Corollary~1.2]{Kasprowski}.

For the reverse inequality, let $R$ be an $n$-dimensional control for $Y/F$, and pull back its cover at scale $r$ under the quotient map. An $r$-component $Z$ of a pulled-back member projects into a single $r$-component of that member. Fix $z\in Z$. By the quotient metric, $Z$ is contained in the union of the $|F|$ balls of radius $R(r)$ centered at the points $hz$, $h\in F$. Along an $r$-chain, centers of successive balls are at distance at most $2R(r)+r$. A simple path in the graph of these balls uses at most $|F|-1$ transitions. Estimating the two endpoint distances to their centers therefore gives
\[
 \operatorname{diam}(Z)\leq 2|F|R(r)+(|F|-1)r.
\]
This proves the reverse inequality with the required control, including when $Y$ has infinite distances.
\end{proof}

\begin{lemma}\label{lemma:powers}
Let $M\curvearrowright X$ be a bounded-to-one action of a commutative monoid generated by $S=\{s_1,\ldots,s_k\}$. Fix an integer $p\geq1$. Write $d$ for the graph metric given by $S$, and $d_p$ for the graph metric given by $s_1^p,\ldots,s_k^p$. Then
\[
 \asdim(X,d)=\asdim(X,d_p).
\]
For a fixed dimension bound $n$, controls transfer in both directions with dependence only on $n,p,k$ and the given control.
\end{lemma}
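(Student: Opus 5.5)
The plan is to compare the two metrics through an auxiliary covering space that carries both of them. Then Lemma~\ref{lemma:finiteQuotient} handles one side and a coarse-density argument handles the other. The metrics $d$ and $d_p$ are not coarsely equivalent on $X$: a $d_p$-edge is a $d$-path of length $p$, but $x$ and $s_jx$ may lie at infinite $d_p$-distance. So a direct comparison will not work, and some device that remembers exponents modulo $p$ is needed.

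\textbf{The space $Y$.} Put $F=(\Z/p)^k$ with standard basis $e_1,\ldots,e_k$. Let $Y=X\times F$ be the graph with edges $\{(x,e),(s_jx,e+e_j)\}$, and let $d_Y$ be its path metric. Since the action is bounded-to-one, each vertex of $Y$ has finite degree. Hence balls are finite and the finite-distance components are proper. The group $F$ acts on $Y$ by translation in the second coordinate, and this action is by graph automorphisms, hence by isometries. Projecting a $Y$-path gives an $X$-path, and lifting an $X$-path from any starting label gives a $Y$-path. Therefore the quotient metric $\min_{h}d_Y((x,0),(y,h))$ equals $d(x,y)$. Lemma~\ref{lemma:finiteQuotient} then gives $\asdim(X,d)=\asdim(Y)$, with controls transferring in both directions depending only on $n$ and $|F|=p^k$.

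\textbf{Comparing $X\times\{0\}$ with $(X,d_p)$.} It remains to show $\asdim(Y)=\asdim(X,d_p)$, with the same kind of control.

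\emph{Coarse density.} The slice $X_0=X\times\{0\}$ is $k(p-1)$-dense in $Y$. From $(x,e)$, apply forward steps $s_j$ exactly $(p-e_j)\bmod p$ times each, and the label becomes $0$.

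\emph{Coarse equivalence of the restricted metrics.} First, a $d_p$-edge $\{x,s_j^px\}$ lifts to a path of length $p$ from $(x,0)$ to $(s_j^px,0)$, so $d_Y\le p\,d_p$ on $X_0$. For the other direction, run the common-sink argument of Lemma~\ref{lemma:uniqueSink} inside $Y$. A $Y$-path from $(x,0)$ to $(y,0)$ of length $L$ yields $ax=by$ with $\ell_S(a)+\ell_S(b)\le L$ and exponent vectors $\alpha\equiv\beta \pmod p$. The concatenation rule in that proof adds exponent vectors, so this congruence is preserved. Choose $c=\prod s_j^{\gamma_j}$ with $0\le\gamma_j<p$ and $\alpha_j+\gamma_j\equiv0$. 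Then $ca$ and $cb$ are words in the $s_j^p$, and $(ca)x=(cb)y$. Hence
\[
d_p(x,y)\le \frac{L+2k(p-1)}{p}.
\]

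\emph{Transferring covers.} The two metrics on $X_0$ are therefore coarsely equivalent with constants depending only on $p,k$. A coarsely dense subset has the same asymptotic dimension as the whole space. Concretely, I would assign each point of $Y$ to the color of a nearest point of $X_0$, enlarge the scale by $2k(p-1)$, and pad the diameter by the same amount. Restriction handles the other direction. Both steps change controls only through $n,p,k$.

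\textbf{Main obstacle.} The main difficulty is the bookkeeping in the common-sink reduction inside $Y$. The labels in $F$ must really record the exponent vectors of $a$ and $b$ modulo $p$ after the commutation moves. The congruence $\alpha\equiv\beta$ must be exactly what allows a single multiplier $c$ to push both $ca$ and $cb$ into $\langle s_1^p,\ldots,s_k^p\rangle$. I would check this by tracking labels edge by edge in the induction of Lemma~\ref{lemma:uniqueSink}: a forward step adds $e_j$ and a backward step subtracts it. Everything else is a formal consequence of Lemma~\ref{lemma:finiteQuotient} and coarse invariance.
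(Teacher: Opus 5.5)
Your proof is correct, and it takes a genuinely different route from the paper's.

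\textbf{The paper's route.} The paper never forms the covering graph $Y=X\times(\Z/p)^k$. Instead it embeds $X$ into $X^I$, with $I=\{0,\ldots,p-1\}^k$, by $\Phi(x)_a=s_1^{a_1}\cdots s_k^{a_k}x$, using the sup of the $d_p$-distances. It lets $F=(\Z/p\Z)^k$ permute coordinates cyclically and takes $\mathcal A=\bigcup_b\tau_b\Phi(X)$. Every acting map is $1$-Lipschitz for $d_p$, so each of the $p^k$ pieces of $\mathcal A$ is \emph{isometric} to $(X,d_p)$. The $d_p$-side therefore costs only the finite union theorem. The real work is on the other side: the paper shows that $\pi\Phi\colon(X,d)\to\mathcal A/F$ is a surjective coarse equivalence via $d_q\le d\le p\,d_q+k(p-1)$.

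\textbf{Your route.} You move the effort to the other side. Your quotient $Y/F$ is literally $(X,d)$, so that side is an exact identification. The coarse comparison moves to the $d_p$-side. There you need coarse density of $X\times\{0\}$ and a labeled version of the common-sink argument. Your invariant, label difference $\equiv\alpha-\beta\pmod p$, tracks exponent vectors of words rather than monoid elements, and it is preserved exactly by the concatenation rule of Lemma~\ref{lemma:uniqueSink}.

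\textbf{What each buys.} Your version avoids the finite union theorem and the auxiliary product metric. The paper's avoids the word-level bookkeeping modulo $p$. The constants play the same roles in both: a factor $p$ and an additive $k(p-1)$. Both arguments use Lemma~\ref{lemma:finiteQuotient} in both directions: the Kasprowski direction to get from $(X,d_p)$ to $(X,d)$, and the elementary pullback direction for the converse. The control dependence on $n,p,k$ is therefore of the same kind.
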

\begin{proof}
First suppose that $(X,d_p)$ has an $n$-dimensional control. Let
\[
 I=\{0,\ldots,p-1\}^k,\qquad F=(\Z/p\Z)^k.
\]
We use $I$ as the set of representatives of $F$. Equip $X^I$ with the extended metric
\[
 d_\infty(\underline x,\underline y)
 =\max_{a\in I}d_p(x_a,y_a).
\]
Define
\[
 \Phi\colon X\longrightarrow X^I,\qquad
 \Phi(x)_a=s_1^{a_1}\cdots s_k^{a_k}x.
\]
Every map associated to an element of $M$ is $1$-Lipschitz for $d_p$, because it commutes with all the maps $s_j^p$. The coordinate $a=0$ is $x$ itself. Consequently,
\begin{equation}\label{eq:powersIsometry}
 d_\infty(\Phi(x),\Phi(y))=d_p(x,y).
\end{equation}

For $b\in F$, let $\tau_b$ permute the coordinates by
\[
 (\tau_b\underline x)_a=x_{a+b},
\]
where addition in the coordinate subscript is taken modulo $p$. These coordinate permutations form an isometric action of $F$. Put
\[
 \mathcal A=\bigcup_{b\in F}\tau_b\Phi(X).
\]
Thus $F$ acts isometrically on $\mathcal A$. Each of the $p^k$ subspaces in this union is isometric to $(X,d_p)$ by \eqref{eq:powersIsometry}. The finite union theorem therefore gives asymptotic dimension at most $n$ for $\mathcal A$, with control depending only on the input control and $p^k$.

We check properness. The maps $s_j^p$ are bounded-to-one, so the graph defining $d_p$ has bounded degree and hence finite balls of finite radius. Consider a ball of radius $R$ in $\mathcal A$. For every $b\in F$ whose subspace $\tau_b\Phi(X)$ meets this ball, choose one point of the intersection. The intersection is contained in the radius-$2R$ ball about that point inside $\tau_b\Phi(X)$. This ball is finite by \eqref{eq:powersIsometry}. There are at most $p^k$ subspaces, so the original ball is finite. In particular, every finite-distance component of $\mathcal A$ is proper.

Let $\pi\colon\mathcal A\to\mathcal A/F$ be the quotient map, and give the quotient the metric
\[
 d_q([\underline x],[\underline y])
 =\min_{b\in F}d_\infty(\underline x,\tau_b\underline y).
\]
The componentwise finite-quotient theorem, Lemma~\ref{lemma:finiteQuotient}, gives asymptotic dimension at most $n$ for $\mathcal A/F$. Its control $R_q$ only depends on $n$, the control of $\mathcal A$, and $p^k$.

We compare this metric with $d$. Fix $j\in\{1,\ldots,k\}$ and compare $\Phi(s_jx)$ with $\tau_{e_j}\Phi(x)$, where $e_j$ is the $j$th standard generator of $F$. At a coordinate $a$ with $a_j<p-1$, these tuples agree. At a coordinate with $a_j=p-1$, the first entry is the image of the second under $s_j^p$. Their $d_p$-distance is therefore at most one. This proves
\[
 d_q(\pi\Phi(x),\pi\Phi(s_jx))\leq1.
\]
It follows along paths that
\begin{equation}\label{eq:powersUpper}
 d_q(\pi\Phi(x),\pi\Phi(y))\leq d(x,y).
\end{equation}

Conversely, suppose the left side is finite, and let $b\in I$ represent a group element attaining the minimum in its definition. The coordinate $a=0$ gives
\[
 d_p\bigl(x,s_1^{b_1}\cdots s_k^{b_k}y\bigr)
 \leq d_q(\pi\Phi(x),\pi\Phi(y)).
\]
Every edge for $d_p$ is a path of length at most $p$ for $d$. Since $0\leq b_j\leq p-1$, we obtain
\begin{equation}\label{eq:powersLower}
 d(x,y)\leq p\,d_q(\pi\Phi(x),\pi\Phi(y))+k(p-1).
\end{equation}
For an $r$-chain in $X$, \eqref{eq:powersUpper} makes its image an $r$-chain in the quotient. Pulling back a cover with control $R_q$, and applying \eqref{eq:powersLower} to endpoints, gives the control
\[
 r\longmapsto pR_q(r)+k(p-1).
\]
This proves $\asdim(X,d)\leq\asdim(X,d_p)$, with the asserted uniformity.

For the reverse inequality, the map $\pi\Phi\colon X\to\mathcal A/F$ is onto, since every orbit in $\mathcal A$ contains a point of $\Phi(X)$. Choose one preimage in $X$ for each quotient point. By \eqref{eq:powersLower}, these choices send an $r$-chain in the quotient to a $(pr+k(p-1))$-chain in $(X,d)$. By \eqref{eq:powersUpper}, they give a cover of the quotient with control
\[
 r\longmapsto R(pr+k(p-1))
\]
whenever $R$ is a control for $(X,d)$. Lemma~\ref{lemma:finiteQuotient} now gives a control for $\mathcal A$. Restricting that cover to its isometric subspace $\Phi(X)$, and using \eqref{eq:powersIsometry}, gives a control for $(X,d_p)$. Each transfer depends only on $n,p,k$ and $R$, completing the proof.
\end{proof}

\begin{proof}[Proof of Lemma~\ref{lemma:reduction}]
Choose $c$ as in Lemma~\ref{lemma:cancellationAlgebra}. Lemma~\ref{lemma:cancellationMetric} gives an action of $\iota(M)$ on $\mathcal T=cX$ and the equality
\[
 \asdim(M\curvearrowright X)
 =\asdim(\iota(M)\curvearrowright\mathcal T).
\]
Restrict this action to $N$. Its generating maps are the restrictions of $s_1^p,\ldots,s_k^p$, so their fiber bound is $K^p$. Lemma~\ref{lemma:powersAlgebra}, applied to $\iota(M)$, gives the assertions about cancellation, torsion, and rank. Lemma~\ref{lemma:powers} gives
\[
 \asdim(\iota(M)\curvearrowright\mathcal T)
 =\asdim(N\curvearrowright\mathcal T).
\]
The control assertions in the two metric lemmas give the stated uniformity.
\end{proof}
\section{The period part}\label{sec:period}

In this section, we work with the period part in Lemma \ref{lemma:period}. Roughly speaking, each relation set carries an action of a quotient of strictly smaller rank.

\begin{proof}[Proof of Lemma~\ref{lemma:period}]
Fix $C\geq1$. For each distinct pair $a,b$ with $\ell_S(a)+\ell_S(b)\leq C$, Lemma~\ref{lemma:relation} gives an action of
\[
 N=M/\langle a=b\rangle
\]
on $\period_{a,b}$. Its generating set is the image of $S$, and its generating maps have fiber bound at most $K$. By Lemma~\ref{lemma:rankDrop}, $\rk(N)=n-1$. The induction hypothesis therefore gives asymptotic dimension at most $n-1$ on $\period_{a,b}$, with control depending only on this quotient, the image of $S$, and $K$. The quotient need not be cancellative or have torsion-free completion; the induction hypothesis applies to every monoid of this smaller rank.

The intrinsic graph metric on $\period_{a,b}$ agrees with its ambient metric by Corollary~\ref{cor:invariant}. Since $M,S,C$ are fixed, only finitely many pairs occur, and the associated quotient monoids are fixed independently of the action. The finite union theorem consequently gives the required bound on $\period_C$.

For completeness, the uniform control in this finite union can be read directly from Lemma~\ref{lemma:asdimDecompos}. If the relation sets have controls $H_1,\ldots,H_q$, define
\[
 D_1(r)=H_1(r),
\]
\[
 D_{j+1}(r)=H_{j+1}(D_j(r)+2r)+2D_j(r)+2r.
\]
Then $D_q$ is a control for their union. Empty relation sets may be discarded; if all are empty there is nothing to prove. The resulting function depends only on $M,S,K,C$.
\end{proof}

\section{The free part}\label{sec:free}

In this section, we work with the free part in Lemma \ref{lemma:free}.

\subsection{Universal space}
Fix an integer $K\geq1$, and put
\[
 \Delta=\max\{2,|S|(K+1)\},\qquad
 \Lambda_m=\left\{1,\ldots,1+\sum_{j=1}^m\Delta^j\right\}
 \quad(m\geq1).
\]
The graph of an action with generator fiber bound $K$ has degree at most $|S|(K+1)$. Its $m$th power, which joins distinct vertices at distance at most $m$, therefore has degree at most $\sum_{j=1}^m\Delta^j$. We can choose a coloring
\begin{equation}\label{eq:lambda}
 \lambda_m\colon X\longrightarrow\Lambda_m,\qquad
 0<d(x,y)\leq m\ \Longrightarrow\ \lambda_m(x)\ne\lambda_m(y).
\end{equation}
Indeed, every component is countable, and greedy coloring of its enumerated vertices uses at most one more color than the degree. The same alphabets also color the powers of the Cayley graph of $\Gamma$, whose degree is at most $2|S|\leq\Delta$. The alphabets are fixed independently of the action and of the chosen colorings.

Let
\[
 \Lambda=\prod_{m\geq1}\Lambda_m,\qquad \Omega=\Lambda^\Gamma,
\]
with the product topology, each $\Lambda_m$ being discrete. Write $z_{u,m}$ for the coordinates of $z\in\Omega$. The group acts by shifts
\begin{equation}\label{eq:shift}
 (\sigma_s z)_{u,m}=z_{us^{-1},m}.
\end{equation}
Thus $\sigma_1=\mathrm{id}$ and $\sigma_s\sigma_t=\sigma_{st}$. Define
\begin{equation}\label{eq:universal}
 \univ=\bigcap_{m\geq1}\ \bigcap_{0<|u^{-1}v|_\Gamma\leq m}
 \{z\in\Omega:z_{u,m}\ne z_{v,m}\}.
\end{equation}
A cylinder prescribes finitely many coordinates in $\Omega$. Such sets are both clopen. A space is called \emphd{zero-dimensional} if it has a basis of clopen sets.

\begin{lemma}\label{lemma:universal}
The space $\univ$ is nonempty, compact, metrizable, zero-dimensional, and invariant under $\sigma$. The $\Gamma$-action on $\univ$ is free. Moreover, for every finite set $E\subseteq\Gamma\times\{1,2,\ldots\}$ and every $a\in\prod_{(u,m)\in E}\Lambda_m$, we have
\[
 \univ\cap\{z\in\Omega:z_{u,m}=a_{u,m}\text{ for all }(u,m)\in E\}
 \ne\emptyset
\]
if and only if
\[
 a_{u,m}\ne a_{v,m}
 \quad\text{whenever }(u,m),(v,m)\in E,
 \quad 0<|u^{-1}v|_\Gamma\leq m.
\]
\end{lemma}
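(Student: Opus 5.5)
We verify the topological properties first, then the cylinder criterion, then nonemptiness, invariance and freeness.

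\emph{Topology.} Each $\Lambda_m$ is finite and discrete, so $\Lambda=\prod_m\Lambda_m$ is compact, metrizable and zero-dimensional by Tychonoff's theorem. Since $\Gamma$ is countable, the same holds for $\Omega=\Lambda^\Gamma$. For fixed $m$ and $u,v$, the condition $z_{u,m}\ne z_{v,m}$ involves only two coordinates, so it defines a clopen set. Hence $\univ$ is closed in $\Omega$, and it inherits compactness, metrizability and zero-dimensionality.

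\emph{Invariance.} By \eqref{eq:shift}, $(\sigma_sz)_{u,m}\ne(\sigma_sz)_{v,m}$ is the condition $z_{us^{-1},m}\ne z_{vs^{-1},m}$. Since $\Gamma$ is abelian, $(us^{-1})^{-1}(vs^{-1})=u^{-1}v$. The defining family of conditions in \eqref{eq:universal} is therefore permuted by $\sigma_s$, so $\sigma_s\univ=\univ$.

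\emph{Cylinder criterion.} Necessity is immediate: a point of $\univ$ in the cylinder satisfies the constraints of \eqref{eq:universal} at the coordinates in $E$. For sufficiency, the constraints for different $m$ are independent, so we work one level $m$ at a time. Level $m$ asks for a proper coloring with $|\Lambda_m|$ colors of the $m$th power of the Cayley graph of $\Gamma$, extending the prescribed partial coloring $u\mapsto a_{u,m}$ on the finite set $E_m=\{u:(u,m)\in E\}$. The hypothesis says exactly that this partial coloring is proper.
\begin{itemize}
\item Enumerate $\Gamma\setminus E_m$ and color greedily.
\item Each vertex has at most $\sum_{j=1}^m(2|S|)^j\le\sum_{j=1}^m\Delta^j$ neighbours at distance between $1$ and $m$.
\item So at most $|\Lambda_m|-1$ colors are ever forbidden, and a free color is always available.
\end{itemize}
Doing this independently for every $m$ produces a point of $\univ$ in the cylinder. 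Taking $E=\emptyset$ gives nonemptiness.

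\emph{Freeness.} Suppose $\sigma_sz=z$ with $s\ne1$, and set $m=|s|_\Gamma\ge1$. Then $z_{s,m}=(\sigma_sz)_{s,m}=z_{1,m}$. But $0<|1^{-1}s|_\Gamma=m\le m$, so \eqref{eq:universal} requires $z_{1,m}\ne z_{s,m}$, a contradiction.

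The main point needing care is the degree count in the cylinder criterion, namely checking that $2|S|\le\Delta$ makes the greedy extension work. This holds by the choice $\Delta=\max\{2,|S|(K+1)\}$ with $K\ge1$. Everything else is routine.
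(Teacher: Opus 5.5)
Your proposal is correct and follows essentially the same route as the paper. You use closedness of the defining conditions in the compact product for the topological claims, a layer-by-layer greedy extension (with $2|S|\le\Delta$ supplying enough colors) for the cylinder criterion, and a coordinate comparison at $m=|s|_\Gamma$ for freeness. The only differences are cosmetic: you obtain nonemptiness from the criterion with $E=\emptyset$, and you compare the coordinates at $1$ and $s$ rather than at $1$ and $s^{-1}$.
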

\begin{proof}
The space $\Omega$ is a countable product of finite discrete spaces, hence compact and metrizable, with a basis of clopen cylinders. The conditions in \eqref{eq:universal} are closed, so $\univ$ is compact, metrizable, and zero-dimensional. A proper $\Lambda_m$-coloring of the $m$th Cayley-graph power exists by the preceding degree estimate. Choosing such a coloring in each layer gives a point of $\univ$.

Each shift is a homeomorphism of $\Omega$, since it permutes the coordinates. The defining conditions are invariant under shifts. If $\sigma_s z=z$ with $s\ne1$, choose $m=|s|_\Gamma\geq1$. Then
\[
 z_{1,m}=(\sigma_s z)_{1,m}=z_{s^{-1},m},
\]
contrary to \eqref{eq:universal}. Thus the action is free.

For the last assertion, necessity follows from \eqref{eq:universal}. Conversely, suppose that $a$ satisfies the displayed inequalities. In layer $m$, keep the colors $a_{u,m}$ for $(u,m)\in E$ and enumerate the remaining group elements. Greedily assign colors to them. At most $\sum_{j=1}^m\Delta^j$ colors are forbidden at each step, including the colors of all prescribed neighbors, so a color is available. For a layer without a prescription, choose any proper coloring. The layers have no additional constraints between them. The resulting point of $\univ$ has the prescribed coordinates, proving sufficiency.
\end{proof}

Guentner, Willett, and Yu introduced dynamic asymptotic dimension in \cite{GWY}. It adapts asymptotic dimension to topological actions by using open covers to break orbit chains into uniformly controlled pieces. Its original motivations came from controlled topology and operator $K$-theory. We recall the definition here \cite[Definition~2.1]{GWY}.

\begin{definition}\label{def:dad}
Let $\Gamma\curvearrowright Y$ be an action by homeomorphisms on a compact metrizable space. We say that
\[
 \operatorname{dad}(\Gamma\curvearrowright Y)\leq n
\]
if, for every finite symmetric set $E\subseteq\Gamma$ containing $1$, there are an open cover $W_0,\ldots,W_n$ of $Y$ and a finite set $F\subseteq\Gamma$ with the following property. For every $i$, $y\in Y$, and finite sequence $s_0=1,s_1,\ldots,s_p\in\Gamma$,
\[
 s_js_{j-1}^{-1}\in E\quad(1\leq j\leq p),
 \qquad s_jy\in W_i\quad(0\leq j\leq p)
\]
imply $s_p\in F$. The least such $n\in\N$ is the \emphd{dynamic asymptotic dimension} of the action; it is $\infty$ if no such $n$ exists.
\end{definition}

Combining \cite[Theorem 4.11 \& Remark 4.14]{GWY} and
\cite[Lemma 8.4]{SzaboWuZach}, the dynamic asymptotic dimension of the amenable group $\Z^n$ freely acting on a zero dimension space is bounded by a sub-optimal constant in $n$. It is further strengthened by \cite[Corollary~3.5]{Pilgrim} and \cite[Theorem~1.3]{CJMST23} that the dynamic asymptotic dimension is precisely bounded by its rank $n$. More precisely, the optimal dimension bound for our use is stated as follows.

\begin{theorem}\label{thm:dadZn}
If $\Gamma\cong\Z^n$ and $Y$ is a compact, metrizable, zero-dimensional space on which $\Gamma$ acts freely by homeomorphisms, then
\[
 \operatorname{dad}(\Gamma\curvearrowright Y)\leq n.
\]
\end{theorem}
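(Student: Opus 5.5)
The plan is to derive Theorem~\ref{thm:dadZn} by translating a \emph{clopen} asymptotic-dimension cover of the orbit graphs into the open cover demanded by Definition~\ref{def:dad}. I would not build the $n$-dimensional cover from scratch. Instead I would take it from the continuous version of the Conley--Jackson--Marks--Seward--Tucker-Drob theorem \cite[Theorem~1.3]{CJMST23}, in the form recorded by Pilgrim \cite[Corollary~3.5]{Pilgrim}. In that form it says: for a free action of $\Z^n$ by homeomorphisms on a zero-dimensional compact metrizable space $Y$, and for every scale $r$, there are clopen sets $W_0,\dots,W_n$ covering $Y$ and $R<\infty$ such that, in every orbit $\Gamma y$ equipped with the word metric transported from $\Gamma$ by the free action, each $r$-component of $W_i\cap\Gamma y$ has diameter at most $R$. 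Freeness is what allows each orbit to be identified isometrically with the Cayley graph of $\Z^n$. Zero-dimensionality is what allows the Borel marker and tiling constructions of \cite{CJMST23} to be carried out with clopen sets instead of merely Borel ones.

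Given a finite symmetric set $E\ni1$, I would set $r=\max_{e\in E}|e|_\Gamma$ and take $W_0,\dots,W_n$ and $R$ as above at scale $r$. These sets are open and cover $Y$. For the finite set I would take $F=B_\Gamma(R)$. To verify the condition in Definition~\ref{def:dad}, let $y\in Y$ and let $s_0=1,s_1,\dots,s_p$ satisfy $s_js_{j-1}^{-1}\in E$ and $s_jy\in W_i$ for all $j$. In the orbit of $y$, the points $s_jy$ and $s_{j-1}y$ correspond to the group elements $s_j$ and $s_{j-1}$. Their distance is $|s_js_{j-1}^{-1}|_\Gamma\le r$, with the convention $d(u,v)=|u^{-1}v|_\Gamma$ and commutativity. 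So the points $s_0y,\dots,s_py$ form an $r$-chain in $W_i\cap\Gamma y$ and lie in a single $r$-component. Hence $|s_p|_\Gamma=d(y,s_py)\le R$, which gives $s_p\in F$.

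The main obstacle is the input itself. We need both the optimal count of $n+1$ sets and sets that are genuinely clopen, with a diameter bound $R$ that is uniform over all of $Y$ rather than merely finite on each orbit. If the cited result only supplies Borel sets, or only orbitwise-finite components, I would first invoke the zero-dimensional refinement in \cite{CJMST23}. There the markers are built from clopen partitions: one uses the coordinates of $Y$ itself, for instance through an equivariant embedding into a shift such as $\univ$. I would then recover uniformity by compactness. Fix a clopen cover whose $r$-components are finite on each orbit. For each $N$, the set of points whose $r$-component in $W_i$ is contained in $B_\Gamma(N)\cdot y$ is open, because it is determined by finitely many clopen conditions. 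These open sets increase with $N$ and cover $Y$, so by compactness a single $N$ suffices, and $R=2N$ works.
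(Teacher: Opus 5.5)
Your proposal is correct and follows the paper's route. The paper gives no independent argument; it obtains Theorem~\ref{thm:dadZn} directly from \cite[Theorem~1.3]{CJMST23} together with \cite[Corollary~3.5]{Pilgrim}, which is the same input you invoke. Your two added steps are routine and sound: translating a clopen, uniformly bounded $(n+1)$-cover of the orbits into the condition of Definition~\ref{def:dad} with $F=B_\Gamma(R)$, and the compactness argument upgrading orbitwise finiteness to a uniform bound (the relevant sets depend only on membership of $gy$ in $W_i$ for $g\in B_\Gamma(N+r)$, hence are clopen).
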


Applying this theorem to the universal space gives the following form of the dynamical input for the free part.
\begin{proposition}\label{prop:dad}
For every integer $r\geq1$, there are $D\geq0$ and a clopen partition
\[
 \univ=W_0\sqcup\cdots\sqcup W_n
\]
such that, whenever $s_0=1,s_1,\ldots,s_p\in\Gamma$ satisfy
\[
 |s_js_{j-1}^{-1}|_\Gamma\leq r\qquad(1\leq j\leq p)
\]
and $\sigma_{s_j}(z)\in W_i$ for all $0\leq j\leq p$, for some $z\in\univ$, then $|s_p|_\Gamma\leq D$.
\end{proposition}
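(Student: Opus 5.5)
The plan is to apply Theorem~\ref{thm:dadZn} to the shift action $\sigma$ of $\Gamma$ on $\univ$, and then to convert the resulting open cover into a clopen partition without losing the chain-control property. Here $\Gamma\cong\Z^n$ because the completion is torsion-free of rank $n$. By Lemma~\ref{lemma:universal}, $\univ$ is a nonempty compact metrizable zero-dimensional space on which $\Gamma$ acts freely by homeomorphisms, so Theorem~\ref{thm:dadZn} gives $\operatorname{dad}(\Gamma\curvearrowright\univ)\le n$.

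Given $r\ge1$, take $E=B_\Gamma(r)$. This set is finite, symmetric and contains $1$. Definition~\ref{def:dad} then provides an open cover $W'_0,\ldots,W'_n$ of $\univ$ and a finite set $F\subseteq\Gamma$. One point needs care: the definition is stated with $s_jy\in W_i$, and we must match it with the shift convention $\sigma_{s_j}(z)$. Since $\sigma$ is a genuine left action ($\sigma_s\sigma_t=\sigma_{st}$), $s_jy$ means exactly $\sigma_{s_j}(y)$. Moreover, the condition $|s_js_{j-1}^{-1}|_\Gamma\le r$ is precisely $s_js_{j-1}^{-1}\in E$. So the hypotheses of the proposition are exactly those of the definition, and they give $s_p\in F$. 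We then put $D=\max_{u\in F}|u|_\Gamma$, which is finite because $F$ is finite.

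It remains to pass from an open cover to a clopen partition. Since $\univ$ is compact and zero-dimensional, each point lies in a clopen set contained in some $W'_i$. By compactness, finitely many such clopen sets $Q_1,\ldots,Q_N$ cover $\univ$, and we fix for each $Q_l$ an index $i(l)$ with $Q_l\subseteq W'_{i(l)}$. Set $W_i=\bigcup_{l:i(l)=i}Q_l\setminus\bigcup_{l'<l}Q_{l'}$; more simply, assign each point to the first $Q_l$ containing it and then to the color $i(l)$. Each $W_i$ is then a finite Boolean combination of clopen sets, hence clopen, the $W_i$ are pairwise disjoint, they cover $\univ$, and $W_i\subseteq W'_i$. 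Shrinking the sets can only make the chain condition easier to satisfy, so a chain staying in $W_i$ also stays in $W'_i$, and the bound $|s_p|_\Gamma\le D$ persists.

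The only genuinely deep input is Theorem~\ref{thm:dadZn}, which is cited. Apart from that, the main point to check is the bookkeeping of the left-action convention and of the order $s_js_{j-1}^{-1}$, which I verified above matches the definition. No further obstacle is expected.
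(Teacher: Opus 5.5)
Your proposal is correct and follows the paper's proof essentially step for step. Like the paper, you apply Theorem~\ref{thm:dadZn} to $\Gamma\curvearrowright\univ$ via Lemma~\ref{lemma:universal} with $E=B_\Gamma(r)$, and you bound the finite set $F$ inside a ball $B_\Gamma(D)$. You then refine the open cover to a clopen partition by compactness and zero-dimensionality, observing that shrinking the sets preserves the displacement bound.
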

\begin{proof}
By Lemma~\ref{lemma:universal}, the action $\Gamma\curvearrowright\univ$ satisfies the hypotheses of Theorem~\ref{thm:dadZn}. Apply Definition~\ref{def:dad} with $E=B_\Gamma(r)$. It gives an open cover by $n+1$ sets and a finite displacement set $F$. Choose an integer $D\geq0$ such that $F\subseteq B_\Gamma(D)$.

By zero-dimensionality and compactness, choose finitely many clopen sets refining the open cover and covering $\univ$. Taking their successive differences gives a finite clopen partition still refining that cover. Assign each atom to an original cover member containing it and combine atoms with the same assignment. This gives $W_0,\ldots,W_n$ without changing the displacement bound.
\end{proof}

For $L\geq1$, let $P_L$ denote the projection onto the coordinates
\[
 B_\Gamma(L)\times\{1,\ldots,L\}.
\]
\begin{lemma}\label{lemma:finiteDependence}
There is an integer $L\geq1$ such that membership in the partition $W_0,\ldots,W_n$ is determined by $P_L$: if $z,z'\in\univ$ and $P_L(z)=P_L(z')$, then they belong to the same $W_i$.
\end{lemma}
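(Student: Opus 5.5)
The argument is a compactness argument in $\Omega$, applied to the clopen partition from Proposition~\ref{prop:dad}.

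First, I will use the fact that each $W_i$ is clopen in $\univ$, and $\univ$ is compact and closed in $\Omega$. Hence each $W_i$ is compact. It is also open in $\univ$, so it is a union of traces on $\univ$ of cylinders of $\Omega$. By compactness, finitely many cylinders $Z_{i,1},\ldots,Z_{i,q_i}$ suffice, with
\[
 W_i=\univ\cap\bigcup_{j}Z_{i,j}.
\]
Each cylinder prescribes only finitely many coordinates $(u,m)$. Doing this for every $i\in\{0,\ldots,n\}$, I obtain a finite set of coordinates in total. I then choose $L$ large enough that every prescribed coordinate $(u,m)$ satisfies $|u|_\Gamma\leq L$ and $m\leq L$. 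Such an $L$ exists because the set of coordinates is finite.

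Next, I check that $P_L$ determines the partition class. Let $z,z'\in\univ$ with $P_L(z)=P_L(z')$. Suppose $z\in W_i$. Then $z$ lies in some cylinder $Z_{i,j}$. This cylinder constrains only coordinates inside $B_\Gamma(L)\times\{1,\ldots,L\}$, and $z'$ agrees with $z$ on all of those coordinates. So $z'\in Z_{i,j}$ as well. Since $z'\in\univ$, this gives $z'\in W_i$.

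The only point needing care is the step from ``open in $\univ$'' to ``finite union of cylinders.'' This uses two facts. The cylinders form a basis of the product topology, so the subspace topology on $\univ$ has their traces as a basis. Compactness of the closed set $W_i$ then extracts a finite subcover. No further obstacle arises.

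An equivalent route avoids cylinders explicitly. The indicator of the partition is a continuous map $\univ\to\{0,\ldots,n\}$ into a discrete space. Any continuous map from a compact subset of a product of finite discrete spaces into a discrete space factors through finitely many coordinates, which gives the same $L$.
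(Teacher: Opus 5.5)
Your proof is correct and is essentially the paper's argument. You cover by finitely many cylinders whose traces on $\univ$ lie in single partition members, take $L$ bounding all their prescribed coordinates, and note that agreement under $P_L$ preserves membership in each cylinder. The only cosmetic difference is that you extract a finite subcover separately for each compact $W_i$, whereas the paper extracts one finite subcover of all of $\univ$.
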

\begin{proof}
For every $z\in\univ$, choose a cylinder in $\Omega$ containing $z$ whose intersection with $\univ$ lies in the member containing $z$. Finitely many such cylinders cover $\univ$. Their defining coordinates lie in $B_\Gamma(L)\times\{1,\ldots,L\}$ for some $L$. Two points agreeing there belong to the same selected cylinders, and hence to the same partition member.
\end{proof}

\subsection{Partition for the free part}
We now relate $\free_C$ to the universal space. The action on $X$ need not extend to $\Gamma$, so we first move the group coordinates used in the construction into $M$ by a common denominator.

\begin{proof}[Proof of Lemma~\ref{lemma:free}]
Fix $M,S,K,r$. Choose $\univ$, then $W_0,\ldots,W_n$ and $D$ by Proposition~\ref{prop:dad}, and finally $L$ by Lemma~\ref{lemma:finiteDependence}. All these choices are independent of the action on $X$.

Put
\[
 w=L+D+r.
\]
By Lemma~\ref{lemma:denominator}, choose $q\in M$ such that
\begin{equation}\label{eq:buffer}
 qB_\Gamma(2w)\subseteq M.
\end{equation}
Choose an integer $C\geq1$ with
\begin{equation}\label{eq:C}
 C\geq2\max_{u\in B_\Gamma(w)}\ell_S(qu).
\end{equation}
The maximum is finite. Consequently, if $x\in\free_C$, the points $(qu)x$, $u\in B_\Gamma(w)$, are pairwise distinct: distinct $u$ give distinct $qu$ in $M$, and their positive lengths have sum at most $C$.

Now fix an action with generator fiber bound $K$ and choose the colorings \eqref{eq:lambda}. For $x\in\free_C$, define
\begin{equation}\label{eq:E}
 \mathcal E(x,w)=\{z\in\Omega:
 z_{u,m}=\lambda_m((qu)x),\ |u|_\Gamma\leq w,\ 1\leq m\leq L\}.
\end{equation}
We claim that
\begin{equation}\label{eq:extension}
 \mathcal E(x,w)\cap\univ\ne\emptyset.
\end{equation}
Suppose $u,v\in B_\Gamma(w)$ and $0<|u^{-1}v|_\Gamma\leq m\leq L$. A shortest Cayley path between them has length at most $m$ and lies in
\[
 B_\Gamma(w+L)\subseteq B_\Gamma(2w).
\]
Multiplication by $q$ moves its vertices into $M$ by \eqref{eq:buffer}. If two successive vertices satisfy $v'=sv$ or $v=sv'$, their translates satisfy the same equality in $M$, since $M\hookrightarrow\Gamma$. Evaluation at $x$ therefore gives a path of length at most $m$ in $X$. Thus
\[
 d((qu)x,(qv)x)\leq m.
\]
The endpoints are distinct by \eqref{eq:C}, so their $\lambda_m$-colors differ. Each prescribed layer in \eqref{eq:E} is a proper partial coloring. Lemma~\ref{lemma:universal} proves \eqref{eq:extension}.

Every point of $\mathcal E(x,w)\cap\univ$ has the same $P_L$-projection. By Lemma~\ref{lemma:finiteDependence}, this nonempty set is contained in one $W_i$. Define
\[
 V_i=\{x\in\free_C:\mathcal E(x,w)\cap\univ\subseteq W_i\},
 \qquad 0\leq i\leq n.
\]
These sets form a partition of $\free_C$.

Let $x_0,\ldots,x_p$ be an $r$-chain in one $V_i$. By Lemma~\ref{lemma:uniqueSink}, choose $a_j,b_j\in M$ such that
\begin{equation}\label{eq:chain}
 a_jx_{j-1}=b_jx_j,\qquad
 \ell_S(a_j)+\ell_S(b_j)\leq r\qquad(1\leq j\leq p).
\end{equation}
Define group elements
\[
 t_j=a_j^{-1}b_j,\qquad s_0=1,\qquad s_j=t_1\cdots t_j.
\]
Then $|t_j|_\Gamma\leq r$. We claim that
\begin{equation}\label{eq:bounded}
 |s_j|_\Gamma\leq D\qquad(0\leq j\leq p).
\end{equation}
Suppose otherwise, and let $h$ be the first index with $|s_h|_\Gamma>D$. Thus
\[
 |s_j|_\Gamma\leq D\ (j<h),\qquad |s_h|_\Gamma\leq D+r.
\]
Choose $z_j\in\mathcal E(x_j,w)\cap\univ$ for $0\leq j\leq h$.

Fix $j\leq h$ and $v\in B_\Gamma(L)$, and put $u=vs_j^{-1}$. Then $|u|_\Gamma\leq L+D+r=w$. For every $1\leq k\leq j$,
\[
 |us_{k-1}a_k^{-1}|_\Gamma
 \leq L+D+r+D+r=L+2D+2r\leq2w.
\]
It follows from \eqref{eq:buffer} that $qus_{k-1}a_k^{-1}\in M$. Apply this monoid element to the $k$th equality in \eqref{eq:chain}. Since $s_k=s_{k-1}a_k^{-1}b_k$, we obtain
\[
 (qus_{k-1})x_{k-1}=(qus_k)x_k.
\]
All coefficients here belong to $M$: they are the products of the applied monoid element with $a_k$ and $b_k$. Induction gives
\begin{equation}\label{eq:transport}
 (qu)x_0=(qus_j)x_j=(qv)x_j.
\end{equation}
For $1\leq m\leq L$, both $u$ and $v$ are prescribed coordinates. Equations~\eqref{eq:shift}, \eqref{eq:E}, and~\eqref{eq:transport} give
\[
 (\sigma_{s_j}z_0)_{v,m}
 =(z_0)_{vs_j^{-1},m}
 =\lambda_m((qu)x_0)
 =\lambda_m((qv)x_j)
 =(z_j)_{v,m}.
\]
Thus $P_L(\sigma_{s_j}z_0)=P_L(z_j)$. Both configurations belong to $\univ$, so Lemma~\ref{lemma:finiteDependence} implies
\[
 z_0,\sigma_{s_1}z_0,\ldots,\sigma_{s_h}z_0\in W_i.
\]
Their successive group displacements have length at most $r$. Proposition~\ref{prop:dad} gives $|s_h|_\Gamma\leq D$, a contradiction. This proves \eqref{eq:bounded}.

Finally, for every $k\leq p$ we have
\[
 |s_{k-1}a_k^{-1}|_\Gamma\leq D+r\leq2w.
\]
Apply $qs_{k-1}a_k^{-1}\in M$ to the $k$th equality in \eqref{eq:chain}. The same calculation, now starting with coefficient $q$, gives
\[
 qx_0=(qs_p)x_p.
\]
By Lemma~\ref{lemma:uniqueSink},
\begin{equation}\label{eq:freeBound}
 d(x_0,x_p)\leq\ell_S(q)+
 \max_{s\in B_\Gamma(D)}\ell_S(qs)=:R_\free.
\end{equation}
Every $qs$ in this formula lies in $M$ by \eqref{eq:buffer}. The finite maximum uses positive monoid word lengths. It depends only on $M,S,K,r$, because $q,D$ were fixed before the action. Any two points of an $r$-component can be the endpoints of such a chain, so this finishes the proof.
\end{proof}

    \subsection*{Acknowledgments}
We thank Jianchao Wu for helpful discussions. 

\printbibliography

\end{document}